\newcommand{\mainsectionstyle}{%
  \renewcommand{\@secnumfont}{\bfseries}
  \renewcommand\section{\@startsection{section}{2}%
    \z@{.5\linespacing\@plus.7\linespacing}{-.5em}%
    {\normalfont\bfseries}}%
}
\numberwithin{equation}{section}
\numberwithin{figure}{section}
\newtheorem{theorem}{Theorem}[section]
\newtheorem{proposition}[theorem]{Proposition}
\newtheorem{lemma}[theorem]{Lemma}
\theoremstyle{definition}
\newtheorem{definition}[theorem]{Definition}
\theoremstyle{remark}
\newtheorem{remark}[theorem]{Remark}
\DeclarePairedDelimiter{\norm}{\lVert}{\rVert}
\newcommand{\RNum}[1]{\uppercase\expandafter{\romannumeral #1\relax}}
\newcommand{\R}{\mathbb{R}}
\renewcommand{\leq}{\leqslant}
\renewcommand{\geq}{\geqslant}
\DeclareMathAlphabet{\mathpzc}{OT1}{pzc}{m}{it}
\renewcommand{\Re}{\mathcal R\!\mathpzc{e}}
\renewcommand{\Im}{\mathcal I\!\mathpzc{m}}
\begin{document}

\mainsectionstyle
\doublespacing

\title[Algebraic standing waves]{Instability of algebraic standing waves for nonlinear Schr\"odinger equations with triple power nonlinearities}

\author[Phan Van Tin]{Phan Van Tin}

\address[Phan Van Tin]{Institut de Math\'ematiques de Toulouse ; UMR5219,
  \newline\indent
  Universit\'e de Toulouse ; CNRS,
  \newline\indent
  UPS IMT, F-31062 Toulouse Cedex 9,
  \newline\indent
  France}
\email[Phan Van Tin]{van-tin.phan@univ-tlse3.fr}

\subjclass{35Q55}

\date{\today}
\keywords{Nonlinear Schr\"odinger equations, standing waves, instability, virial identity}

\begin{abstract}
We consider the following triple power nonlinear Schr\"odinger equation:
\begin{equation*}
iu_t+\Delta u+a_1|u|u+a_2|u|^{2}u+a_3|u|^{3}u=0.
\end{equation*}
We are interested in \textit{algebraic standing waves} i.e standing waves with algebraic decay above equation in dimensions $n$ ($n=1,2,3$). We prove the instability of these solutions in the cases DDF (we use abbreviation D: defocusing ($a_i<0$), F: focusing ($a_i>0$)) and DFF when $n=2,3$ and in the case DFF with $a_1=-1$, $a_3=1$ and $a_2<\frac{32}{15\sqrt{6}}$ when $n=1$. Under these assumptions, the standing waves are orbitally unstable in the case of small positive frequency. When the highest power is $L^2(\R^n)$-supercritical power (for $n=2,3$), $a_1=-1$, $a_3=1$ and $a_2>-\varepsilon$ for $\varepsilon>0$ small enough in the case $n=3$, we prove that standing waves with positive frequency are unstable by blow up.
\end{abstract}

\maketitle
\tableofcontents

\section{Introduction}
In this paper, we are interested in the following triple power nonlinear Schr\"odinger equation:
\begin{equation}\label{triple power equation}
iu_t+\Delta u+a_1|u|u+a_2|u|^{2}u+a_3|u|^{3}u=0, \quad (t,x) \in \R \times \R^n,
\end{equation}
where $a_1,a_2,a_3\in\R$ and $n \in \{1,2,3\}$. 

In the cases $n=1,2,3$, \eqref{triple power equation} is in $H^1(\R^n)$-subcritical case. This ensures that \eqref{triple power equation} is locally well posed in $H^1(\R^n)$ (see e.g \cite{Ca03}). The standing waves of \eqref{triple power equation} are solutions of the form $u_{\omega}(t,x)=e^{i\omega t}\phi_{\omega}(x)$, where $\phi_{\omega}$ solves:
\begin{equation}
\label{SW profile}
-\omega \phi_{\omega} +\Delta\phi_{\omega}+a_1|\phi_{\omega}|\phi_{\omega}+a_2|\phi_{\omega}|^{2}\phi_{\omega}+a_3|\phi_{\omega}|^{3}\phi_{\omega}=0.
\end{equation}

Consider the focusing nonlinear Schr\"odinger equation with single power $|u|^{p-1}u$. In this case, the standing waves are orbitally stable if $p<1+\frac{4}{n}$ ($L^2(\R^n)$-subcritical) and orbitally unstable if $p> 1+\frac{4}{n}$ ($L^2(\R^n)$-supercritical). In this paper, we study the stability and instability of standing waves with multiple power nonlinearity combining $L^2(\R^n)$-subcritical power and $L^2(\R^n)$-supercritical power (for $n=2,3$) and all $L^2(\R^n)$-subcritical powers (for $n=1$).

In \cite{LiTsIa21}, the authors study existence and stability of standing waves of \eqref{triple power equation} in one dimension. Existence of standing waves is obtained by ODE arguments. By studying the properties of the nonlinearity, the authors give domains of parameters for existence and nonexistence of standing waves. Stability results are obtained by studying the sign of an integral found by Iliev and Kirchev \cite{IlKi93}, based on the criteria of stability of Grillakis, Shatah and Strauss \cite{GrShSt87,GrShSt90,ShSt85}. Using this criteria, in \cite{Ma08}, the author proved the stability and the instability of standing waves for $1$-dimensional nonlinear Schr\"odinger equation with double power and triple power nonlinearity. In the case of triple power nonlinearity, the author showed that stability of standing waves change by $\omega$, two and three times. This does not occur in the cases of single power and double power.

In the special case $\omega=0$, the profile $\phi_0$, which for convenience we denote by $\phi$, satisfies:
\begin{equation}
\label{SW algebraic profile}
\Delta\phi+a_1|\phi|\phi+a_2|\phi|^{2}\phi+a_3|\phi|^{3}\phi=0.
\end{equation}  

The equation \eqref{SW algebraic profile} can be rewritten as $S'(\phi)=0$ where $S$ is defined by
\begin{equation}
S(v):= \frac{1}{2}\norm{\nabla v}_{L^2}^2-\frac{a_1}{3}\norm{v}^{3}_{L^{3}}-\frac{a_2}{4}\norm{v}^{4}_{L^{4}}-\frac{a_3}{5}\norm{v}^{5}_{L^{5}}. \label{define of S omega}
\end{equation}
Define
\begin{align}
X&:=\dot{H}^1(\R^n) \cap L^3(\R^n), \quad \text{ and } \norm{u}_X:= \norm{\nabla u}_{L^2}+\norm{u}_{L^3},
\label{defind of X}\\
d&:=\inf\{S(v): v \in X\setminus \{0\}, S'(v)=0\}. \label{define of d}
\end{align}

The algebraic standing waves are standing waves with algebraic decay. In this paper, we are only interested in a special kind of algebraic standing waves which are minimizers of the problem \eqref{define of d}. Throughout this paper, for convenience, we define an algebraic standing wave as a solution of \eqref{SW algebraic profile} solving problem \eqref{define of d}. Thus, the function $\phi$ is an algebraic standing wave of \eqref{triple power equation} if $\phi\in\mathcal{G}$, where $\mathcal{G}$ is defined by
\begin{equation}
\mathcal{G}:= \{v\in X\setminus\{0\}:S'(v)=0,S(v)=d\}.\label{define of G}
\end{equation}

The instability of algebraic standing waves was studied in \cite{FuHa21} for double power nonlinearities. Using similar arguments as in \cite{FuHa21}, we study existence and instability of algebraic standing waves for the nonlinear Schr\"odinger equation with triple power nonlinearities \eqref{triple power equation}.    

First, we study the existence of algebraic standing waves of \eqref{triple power equation}. As in \cite{LiTsIa21}, we will use the abbreviation D: defocusing when $a_i<0$ and F: focusing when $a_i>0$. In Section \ref{sec1}, we prove the following result.
\begin{proposition}
\label{pro1}
Let $n=1$. The equation \eqref{SW algebraic profile} has a unique even positive solution $\phi$ in the space $H^1(\R)$ in the following cases: DFF, DDF, DFD and $a_1=a_3=-1$, $a_2>\frac{8}{\sqrt{15}}$. Moreover, all solutions of \eqref{SW algebraic profile} are of the form $e^{i\theta}\phi(x-x_0)$ for some $\theta,x_0\in\R$. They are all algebraic standing waves of \eqref{triple power equation}. 
\end{proposition}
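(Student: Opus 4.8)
The plan is to reduce \eqref{SW algebraic profile} with $n=1$ to a planar autonomous ODE and read off a homoclinic orbit. First I would remove the gauge and translation degrees of freedom. Writing the one-dimensional equation as $\phi''+g(|\phi|)\phi=0$ with $g(s)=a_1 s+a_2 s^2+a_3 s^3$, the quantity $\operatorname{Im}(\bar\phi\phi')$ is constant, since $\operatorname{Im}(\bar\phi\phi'')=-g(|\phi|)\operatorname{Im}(|\phi|^2)=0$; as any $\phi\in H^1(\R)$ satisfies $\phi,\phi'\to 0$ at infinity, this constant vanishes, so in polar form $|\phi|^2\theta'\equiv 0$ and the phase is constant. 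Hence every $H^1$ solution equals $e^{i\theta}\rho$ for a real $\rho$ solving the real ODE, and a positive solution corresponds to $\rho>0$. Translating the (necessarily unique) maximum to the origin and combining the reversibility $x\mapsto -x$ of the autonomous equation with ODE uniqueness then forces evenness. This reduces everything to positive, even, $H^1$ solutions of $\phi''+a_1\phi^2+a_2\phi^3+a_3\phi^4=0$.

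Next I would use the first integral. Multiplying by $\phi'$ and integrating gives $\tfrac12(\phi')^2+\tfrac{a_1}{3}\phi^3+\tfrac{a_2}{4}\phi^4+\tfrac{a_3}{5}\phi^5=E$, and the decay $\phi,\phi'\to0$ forces $E=0$, so
\[
(\phi')^2=-2\phi^3 P(\phi),\qquad P(\phi):=\frac{a_1}{3}+\frac{a_2}{4}\phi+\frac{a_3}{5}\phi^2 .
\]
A positive even homoclinic solution therefore exists precisely when $P$ has a smallest positive root $\phi_{\max}$ with $P<0$ on $(0,\phi_{\max})$: the level set $(\phi')^2=-2\phi^3P(\phi)$ is then a loop leaving the origin, reaching the turning point $\phi_{\max}$ (a simple root, hence attained in finite $x$) and returning, and $\phi$ is recovered by quadrature. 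Because $a_1<0$ in all the listed cases, near $\phi=0$ one has $(\phi')^2\sim-\tfrac{2a_1}{3}\phi^3>0$, which both guarantees $P(0)<0$ and yields the algebraic decay $\phi(x)\sim c\,x^{-2}$; this is exactly what makes the solution a genuine algebraic standing wave and places $\phi$ in $L^2\cap\dot H^1(\R)\cap L^3(\R)$, hence in $H^1(\R)\cap X$.

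The case distinction is then a discussion of the quadratic $P$. When $a_3>0$ (cases DFF and DDF) the parabola opens upward with $P(0)=a_1/3<0$, so it has exactly one positive root $\phi_{\max}$, the other root being negative; thus $P<0$ on $(0,\phi_{\max})$ automatically and the homoclinic exists with no further condition. When $a_3<0$ (case DFD) the parabola opens downward with $P(0)<0$, so two positive roots are needed; since the product and sum of the roots are positive, this happens exactly when the discriminant $\tfrac{a_2^2}{16}-\tfrac{4a_1a_3}{15}$ is positive, which after the scaling normalization $a_1=a_3=-1$ is precisely $a_2>\tfrac{8}{\sqrt{15}}$, the smaller root then being the turning point $\phi_{\max}$. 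In every admissible case the quadrature produces a positive even solution decaying like $x^{-2}$.

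Finally, for uniqueness and the variational conclusion: any positive $H^1$ solution must, by the reductions above, have $E=0$ and hence lie on this single homoclinic loop, whose turning value $\phi_{\max}$ (the smallest positive root of $P$) and shape are fixed up to the location of the maximum; together with the gauge factor this yields exactly the family $e^{i\theta}\phi(\cdot-x_0)$. Since these are then the only nonzero critical points of $S$, they all share the common value $S(\phi)$, so $d=S(\phi)$ and each lies in $\mathcal G$, i.e.\ they are algebraic standing waves. I expect the \emph{hard part} to be the DFD analysis: extracting the sharp constant $8/\sqrt{15}$ from the discriminant, and, more importantly, verifying in the phase plane that the homoclinic loop is the only bounded positive orbit belonging to $H^1(\R)$ (ruling out the periodic orbits at nonzero energy levels and orbits decaying at only one end), since this is what upgrades existence to the full classification of solutions.
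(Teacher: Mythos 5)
Your proposal is correct and follows essentially the same route as the paper: reduce to the real ODE by showing the momentum $\Im(\bar\phi\phi')$ vanishes, use the zero-energy first integral, and decide existence by locating the first positive zero $c$ of $G$ (your quadratic $P=G(\phi)/\phi^3$) with $g(c)>0$, which reproduces the paper's case discussion including the DFD threshold $a_2>8/\sqrt{15}$ and the $x^{-2}$ decay giving $\phi\in H^1$. The only cosmetic difference is that you unpack the phase-plane construction that the paper delegates to the Berestycki--Lions criterion (Lemma \ref{old version of existence}).
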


In high dimensions, the situation is more complex than in the one dimension. The solutions of \eqref{SW algebraic profile} are very diverse. It is not easy to describe all such solutions as in the dimension one. Thus, classifying the algebraic standing waves of \eqref{triple power equation} is not easy problem. It turns out that a radial positive solutions of \eqref{SW algebraic profile} is also an algebraic standing wave of \eqref{triple power equation}. To study the positive radial solutions of \eqref{SW algebraic profile}, we prove the following result in Section \ref{sec1}.  
\begin{proposition}
\label{pro2}
Let $n=2,3$ and DDF or DFF. Then there exists a unique radial positive solution of \eqref{SW algebraic profile}.
\end{proposition}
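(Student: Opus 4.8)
My plan is to reduce \eqref{SW algebraic profile} to a radial ODE and then treat existence by shooting and uniqueness by a ground-state uniqueness criterion. For a radial profile $\phi=\phi(r)$, $r=|x|$, the equation reads
\begin{equation*}
\phi'' + \frac{n-1}{r}\phi' + f(\phi) = 0, \qquad \phi(0)=\alpha,\quad \phi'(0)=0,
\end{equation*}
with $f(s)=a_1 s^2+a_2 s^3+a_3 s^4 = s^2(a_1+a_2 s+a_3 s^2)$ and $G(s):=\int_0^s f = s^3\left(\frac{a_1}{3}+\frac{a_2}{4}s+\frac{a_3}{5}s^2\right)$. In both DDF and DFF one has $a_1<0<a_3$, so the quadratic factors of $f$ and of $G$ each have exactly one positive root, say $b$ and $\beta$; a short check gives $0<b<\beta$ together with $f<0$ on $(0,b)$, $f>0$ on $(b,\infty)$, and $G<0$ on $(0,\beta)$, $G>0$ on $(\beta,\infty)$. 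This single-sign-change structure is what makes the ground-state machinery available.

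For existence I would run the standard shooting argument. The energy $E(r):=\frac12\phi'(r)^2+G(\phi(r))$ obeys $E'(r)=-\frac{n-1}{r}\phi'(r)^2\le 0$, so $E$ is non-increasing with $E(0)=G(\alpha)$; since a solution decaying to $0$ forces $E(r)\to 0$, only amplitudes $\alpha>\beta$ can yield a ground state. Among these I would separate the amplitudes whose trajectory reaches $0$ at finite radius (large $\alpha$: too much energy for the damping $\frac{n-1}{r}\phi'^2$ to absorb, so the solution overshoots) from those that stay positive but turn back upward before reaching $0$ (amplitudes just above $\beta$, whose initial energy $G(\alpha)$ is small). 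Both sets are open by continuous dependence and nonempty, so the separating amplitude $\alpha^*$ produces a positive, eventually monotone solution with $E\downarrow 0$, i.e.\ a ground state; alternatively one may quote the variational existence of Berestycki--Lions in the zero-mass space $X=\dot{H}^1(\R^n)\cap L^3(\R^n)$, the growth $f(s)\sim a_3 s^4$ being subcritical for $n=2,3$. Matching the solution at infinity to the leading nonlinear balance $\phi''+\frac{n-1}{r}\phi'\approx |a_1|\phi^2$ gives the algebraic decay $\phi(r)\sim c\,r^{-2}$ with $c>0$, which is exactly the rate placing $\phi\in X$ for $n=2,3$.

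For uniqueness I would appeal to a ground-state uniqueness theorem (McLeod, Peletier--Serrin, Serrin--Tang), whose hypotheses are the single-sign-change structure above plus a monotonicity of the quotient $q(s):=\frac{s f'(s)}{f(s)}=\frac{2a_1+3a_2 s+4a_3 s^2}{a_1+a_2 s+a_3 s^2}$ on $\{f>0\}=(b,\infty)$. A direct computation shows $q'(s)$ has the sign of $N(s):=a_2 a_3 s^2+4a_1 a_3 s+a_1 a_2$, and using $a_3 b^2=-a_2 b-a_1$ one gets $N(b)=b\,(4a_1 a_3-a_2^2)<0$. In the DDF case $a_2<0$ makes $N$ a downward parabola whose larger root lies below $b$, so $N<0$ on all of $(b,\infty)$; hence $q$ is non-increasing there and uniqueness follows directly. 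The DFF case is the main obstacle: there $N$ opens upward with a root $r_+>b$, so $q$ decreases and then increases on $(b,\infty)$ and the simple monotonicity hypothesis fails. To recover uniqueness for DFF I would instead verify a weaker single-sign-change condition of Cortázar--Elgueta--Felmer / Shioji--Watanabe type, namely that an auxiliary function built from $F(s)=\int_0^s f$ and $f$ changes sign exactly once on $(b,\infty)$, or carry out a direct Sturmian comparison between two ground-state trajectories, exploiting that the $(n-2)$-weighted Pohozaev functional keeps a definite sign. Confirming this refined condition in every DFF regime, and tracking the dimension-dependent Pohozaev term for both $n=2$ and $n=3$, is where the real work lies.
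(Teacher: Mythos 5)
Your uniqueness argument has a genuine gap, and you have located it yourself: the criterion you choose --- monotonicity of $q(s)=sf'(s)/f(s)$ on $\{f>0\}$ --- provably fails in the DFF case, since $N(s)=a_2a_3s^2+4a_1a_3s+a_1a_2$ opens upward when $a_2>0$ and, having $N(b)<0$, changes sign on $(b,\infty)$. Deferring DFF to an unverified ``weaker single-sign-change condition'' or an unexecuted Sturmian comparison leaves half of the proposition unproved; as written, uniqueness is established only for DDF. The paper sidesteps this entirely by choosing a different uniqueness theorem, namely Pucci--Serrin, whose hypothesis is not monotonicity of $sf'/f$ but the lower bound
\[
\frac{d}{du}\left[\frac{G(u)}{g(u)}\right]\ \geq\ \frac{n-2}{2n}\qquad\text{for } u>0,\ u\neq a .
\]
For $n=2,3$ it suffices to verify this with the right-hand side replaced by $\tfrac16$; after clearing denominators this reduces to a single quartic polynomial inequality in $s$ with coefficients depending on $a_2$, which the paper checks by decomposing the quartic into manifestly nonnegative pieces, \emph{uniformly in $a_2\in\R$}. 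Thus DDF and DFF are handled in one stroke, with no case split and no refined Cort\'azar--Elgueta--Felmer/Shioji--Watanabe condition needed. If you want to complete your write-up, the cleanest fix is to switch to this criterion; otherwise you must actually carry out the DFF verification you only gesture at.

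The existence half of your proposal is essentially correct and close to the paper's route: the paper quotes the Berestycki--Lions--Peletier shooting theorem after checking the single-sign-change structure of $g(s)=-s^2+a_2s^3+s^4$, with $\beta=\infty$ and subcritical growth exponent $l$ ($4<l<5$ for $n=3$), and the algebraic decay $\phi(r)\lesssim r^{-2}$ you extract from the balance $\phi''+\frac{n-1}{r}\phi'\approx\phi^2$ is exactly the estimate the paper proves later (proof of Remark \ref{remark nice}(2)) to conclude $\phi\in H^1(\R^n)$. Your hand-run shooting sketch would still need the overshoot/undershoot sets to be shown open and nonempty in the zero-mass setting, but since you also offer to quote the Berestycki--Lions--Peletier result directly, that part is acceptable.
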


Before stating the next results, we need some definitions. Firstly, we define the \textit{Nehari functional} as follows:
\begin{equation}\label{define of K omega}
K(v):= \left<S'(v),v\right>=\norm{\nabla v}^2_{L^2}-a_1\norm{v}^{3}_{L^3}-a_2\norm{v}^{4}_{L^{4}}-a_3\norm{v}^{5}_{L^{5}}.
\end{equation}
The \textit{rescaled function} is defined by:
\begin{equation}\label{define of v^lambda}
v^{\lambda}(x):= \lambda^{\frac{n}{2}}v(\lambda x).
\end{equation}
The following is \textit{Pohozhaev functional}:
\begin{equation}\label{define of P}
P(v):= \partial_{\lambda}S(v^{\lambda})\vert_{\lambda=1}=\norm{\nabla v}^2_{L^2}-\frac{n a_1}{6}\norm{v}^3_{L^3}-\frac{n a_2}{4}\norm{v}^4_{L^4}-\frac{3n a_3}{10}\norm{v}^{5}_{L^{5}}.
\end{equation}
The \textit{Nehari manifold} is defined by:
\begin{equation*}
\mathcal{K}:= \{v\in X\setminus\{0\}: K(v)=0\}.
\end{equation*}
Moreover, we consider the following minimization problem:
\begin{equation}\label{define of mu}
\mu:= \inf\left\{S(v): v\in \mathcal{K}\right\}.
\end{equation}
The following is the set of minimizers of problem \eqref{define of mu}:
\begin{equation}\label{define of M}
\mathcal{M}:=\{v \in \mathcal{K}: S(v)=\mu\}.
\end{equation}
Finally, we define a specific set which uses in our proof:
\begin{equation}\label{define of B omega}
\mathcal{B}:= \left\{ v \in H^1(\R^n): S(v)<\mu, P(v)<0\right\}.
\end{equation}

It turns out that the solution of \eqref{SW algebraic profile} given by Proposition \ref{pro2} satisfies a variational characterization and each algebraic standing wave of \eqref{triple power equation} is up to phase shift and translation of this special solution. More precise, in Section \ref{sec2}, we prove the following result. 

\begin{proposition} \label{pro variational characteristic}
Let $n=1,2,3$ and DDF or DFF. Then the radial positive solution $\phi$ of \eqref{SW algebraic profile} given by Proposition \ref{pro1} and Proposition \ref{pro2} satisfies
\[
S(\phi)=\mu.
\] 
where $S$ and $\mu$ are defined as in \eqref{define of S omega}, \eqref{define of mu} respectively. Moreover, all algebraic standing waves of equation \eqref{triple power equation} are of the form
\[
e^{i\theta_0}\phi(\cdot-x_0),
\]
for some $\theta\in\R$ and $x_0\in\R^n$. 
\end{proposition}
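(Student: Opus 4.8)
The plan is to realize the radial positive solution $\phi$ as a minimizer of the Nehari problem \eqref{define of mu} and then to classify all minimizers. Throughout, write $A:=\norm{\nabla v}_{L^2}^2$ and $B_p:=\norm{v}_{L^p}^p$, and for $v\in X\setminus\{0\}$ consider the fiber $g_v(t):=S(tv)$, $t>0$. A computation gives $g_v'(t)=t\,h_v(t)$ and $K(tv)=t^2h_v(t)$ with $h_v(t)=A-a_1tB_3-a_2t^2B_4-a_3t^3B_5$. In both DDF and DFF one has $a_1<0$ and $a_3>0$, so $h_v(0)=A>0$, $h_v(t)\to-\infty$ as $t\to\infty$, and $h_v$ rises to a single maximum before decreasing; hence $h_v$ has exactly one positive zero $t^*(v)$, which is simple. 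Therefore $g_v$ attains its maximum on $(0,\infty)$ at the unique point $t^*(v)$, and $t^*(v)v$ is the unique intersection of the ray $\{tv\}$ with $\mathcal{K}$. I will use three consequences: $\mu=\inf_{v\neq0}\max_{t>0}S(tv)$; the identity $S(v)=S(v)-\tfrac14K(v)=\tfrac14A-\tfrac{a_1}{12}B_3+\tfrac{a_3}{20}B_5$ on $\mathcal{K}$, whose right side is a sum of nonnegative terms (as $a_1<0$, $a_3>0$), giving $\mu>0$; and $\dual{K'(v)}{v}=g_v''(1)=h_v'(1)<0$ for $v\in\mathcal{K}$, the strict sign following from the simplicity of the zero.

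To produce a minimizer I would start from a minimizing sequence in $\mathcal{K}$ and apply Schwarz symmetrization. Symmetrization preserves each $B_p$ and does not increase $A$; since $J(v):=\max_{t>0}S(tv)$ is strictly increasing in $A$ (by the envelope theorem $\partial_AJ=\tfrac12t^*(v)^2>0$), the rearranged functions still minimize $J\equiv\mu$, and after rescaling to $\mathcal{K}$ we obtain a radial minimizing sequence $\{\tilde v_k\}$. The relation $S(\tilde v_k)=\tfrac14A_k-\tfrac{a_1}{12}B_{3,k}+\tfrac{a_3}{20}B_{5,k}\to\mu$ bounds $A_k$, $B_{3,k}$, $B_{5,k}$, and Gagliardo--Nirenberg interpolation in $X=\dot H^1\cap L^3$ (available for $3\le p<2^*$ when $n=3$ and for every $p\ge3$ when $n=2$) then bounds $B_{4,k}$, so $\{\tilde v_k\}$ is bounded in $X$. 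The main obstacle is compactness: because $\dot H^1$ is scale invariant and radial symmetry does not remove the scaling degree of freedom, a radial minimizing sequence may still concentrate or spread. I would exclude this by a concentration--compactness argument adapted to $X$, exploiting that the three distinct homogeneities $3,4,5$ break exact scale invariance and yield the strict subadditivity needed to rule out dichotomy, while the constraint together with $\mu>0$ rules out vanishing. This yields a radial minimizer $\psi\in\mathcal{M}$.

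Given $\psi\in\mathcal{M}$, the Lagrange multiplier rule provides $\eta$ with $S'(\psi)=\eta K'(\psi)$; pairing with $\psi$ and using $\dual{S'(\psi)}{\psi}=K(\psi)=0$ and $\dual{K'(\psi)}{\psi}<0$ forces $\eta=0$, so $\psi$ solves \eqref{SW algebraic profile}. Passing to $\abs{\psi}$ and symmetrizing gives a nonnegative radial solution, which by the uniqueness in Proposition \ref{pro2} must equal $\phi$; hence $S(\phi)=S(\psi)=\mu$, the first assertion. Since every nonzero solution satisfies $K=\dual{S'(\cdot)}{\cdot}=0$ and thus lies in $\mathcal{K}$, we get $d\ge\mu$, while $\psi$ being a solution gives $\mu=S(\psi)\ge d$; therefore $d=\mu$ and $\mathcal{G}=\mathcal{M}$.

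It remains to classify $\mathcal{M}$. For $\psi\in\mathcal{M}$ the diamagnetic inequality $\norm{\nabla\abs{\psi}}_{L^2}\le\norm{\nabla\psi}_{L^2}$, with the $L^p$-terms unchanged, shows that $\abs{\psi}$ (rescaled to $\mathcal{K}$) is again a minimizer and that equality holds, which forces $\psi=e^{i\theta_0}\abs{\psi}$ for some constant $\theta_0$. The nonnegative minimizer $\abs{\psi}$ solves \eqref{SW algebraic profile}; the strict Schwarz rearrangement inequality applied to a minimizer makes it radial about some point $x_0$, and uniqueness of the positive radial solution (Proposition \ref{pro2}) gives $\abs{\psi}=\phi(\cdot-x_0)$. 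Hence $\psi=e^{i\theta_0}\phi(\cdot-x_0)$, as claimed. The genuinely delicate points are the compactness in the second step and the rigidity (equality in the diamagnetic and strict rearrangement inequalities) used to recover both the symmetry and the phase.
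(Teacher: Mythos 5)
Your overall strategy is the same as the paper's: realize $\phi$ as a minimizer of the Nehari problem \eqref{define of mu}, kill the Lagrange multiplier via $\dual{K'(\psi)}{\psi}<0$, reduce a general minimizer to a positive radial solution and invoke the uniqueness of Proposition \ref{pro2}, and finally identify $\mathcal{G}=\mathcal{M}$. The fiber-map preliminaries, the identity $S=\tfrac14K+J$, the positivity of $\mu$, and the final classification $d=\mu$ all match the paper. However, the crux of the proposition is the existence of a minimizer, and there your argument has a genuine gap: you correctly observe that Schwarz symmetrization does not remove the scaling degeneracy of $\dot H^1(\R^n)\cap L^3(\R^n)$, and then you simply assert that ``a concentration--compactness argument adapted to $X$'' with ``strict subadditivity'' will rule out dichotomy. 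No such subadditivity inequality is formulated (there is no mass parameter here with respect to which $\mu$ could be subadditive), and nothing in your sketch actually prevents a radial minimizing sequence from concentrating at the origin or spreading out. The paper's mechanism (Lemma \ref{M not empty}) is different and is the real content: after extracting a nontrivial weak limit $v_0$ via the translation-profile Lemma \ref{lm converges weakly}, the Brezis--Lieb splitting (Lemma \ref{lm Lieb}) gives $J(w_n-v_0)<\mu$ for large $n$, hence $K(w_n-v_0)>0$ by the reformulation \eqref{eq new of mu 2}, hence $K(v_0)\le 0$, hence $J(v_0)\ge\mu$; combined with weak lower semicontinuity this forces $J(v_0)=\mu$ and strong convergence. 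This sign-of-$K$-on-the-remainder argument is what you would need to supply; no symmetrization is required.

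Two further points in your classification step deserve care. First, your phase rigidity (``equality in the diamagnetic inequality forces $\psi=e^{i\theta_0}\abs{\psi}$'') and your appeal to Proposition \ref{pro2} both require $\abs{\psi}>0$ everywhere, which you do not establish; the paper obtains it from the strong maximum principle (\cite[Theorem~9.10]{LiLo01}) applied to the elliptic equation satisfied by the nonnegative part, and then deduces the constant phase from the equality case of $\norm{\nabla\abs{\psi}}_{L^2}=\norm{\nabla\psi}_{L^2}$ (\cite[Theorem~7.8]{LiLo01}) --- essentially your argument, made precise. Second, for radial symmetry you invoke the rigidity of the P\'olya--Szeg\H{o} inequality (Brothers--Ziemer), which requires verifying that the critical set of the rearrangement is negligible; the paper avoids this delicate hypothesis entirely by citing the moving-plane result \cite[Theorem~1]{LiNi93} for positive decaying solutions of \eqref{SW algebraic profile}. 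Your route can likely be repaired, but as written the existence of the minimizer is not proved and the symmetry/positivity steps rest on rigidity statements whose hypotheses are not checked.
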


\begin{remark}
\label{remark nice}
\begin{itemize}
\item[(1)] In case DFD, we only obtain the result on existence of algebraic standing waves when $n=1$ (see Proposition \ref{pro1}). The variational characterization of algebraic standing waves and stability or instability of these solutions are open problems, even in dimension one. 
\item[(2)] By using similar arguments as in \cite[Proof of Proposition 3.5]{FuHa21}, we prove that the algebraic standing waves in higher dimensions ($n=2,3$) are also in $H^1(\R^n)$.
\item[(3)] By scaling invariance of \eqref{triple power equation}, we may assume $|a_1|=|a_3|=1$ without loss of generality. This assumption will be made throughout the rest of this paper. 
\end{itemize}
\end{remark}

Before stating the main result, we define the orbital stability and orbital instability of standing waves.
\begin{definition}
Let $u_{\omega}(t,x)=e^{i\omega t}\phi_{\omega}(x)$ be a standing wave solution of \eqref{triple power equation}. We say that this solution is orbitally stable if for all $\varepsilon>0$ there exists $\delta>0$ such that for each $u_0 \in H^1(\R^n)$ such that $\norm{u_0-\varphi_{\omega}}_{H^1}<\delta$ then the associated solution $u$ of \eqref{triple power equation} is global and satisfies 
\[
\inf_{\theta\in\R,\ y\in\R^n}\norm{u(t)-e^{i\theta}\varphi_{\omega}(\cdot-y)}_{H^1} <\varepsilon.
\]
Otherwise, $u_{\omega}$ is orbitally unstable. 
\end{definition}

Our main result is the following.
\begin{theorem}
\label{Theorem 1}
Let $n=1,2,3$. Assume that the parameters of \eqref{triple power equation} satisfy DDF or DFF when $n=2,3$ or DFF and $a_2<\frac{32}{15\sqrt{6}}$ when $n=1$. Then the algebraic standing wave $\phi$ given as in Proposition \ref{pro1} and Proposition \ref{pro variational characteristic} is orbitally unstable in $H^1(\R^n)$.
\end{theorem}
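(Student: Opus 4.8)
The plan is to prove orbital instability by the variational/dynamical scheme of Berestycki--Cazenave type, adapted to the zero-frequency setting exactly as in \cite{FuHa21}: I will produce initial data arbitrarily close to $\phi$ in $H^1$ that are trapped in the invariant set $\mathcal{B}$, on which a (localized) virial quantity is strictly concave, and then show that such solutions must leave every $H^1$-neighborhood of the compact orbit $\mathcal{G}=\{e^{i\theta}\phi(\cdot-y)\}$. The decisive simplification at $\omega=0$ is that the action $S$ coincides with the conserved energy; hence $S(u(t))=S(u_0)$ along the flow, and I will not need to combine $S$ with the mass. I assume $H^1$ local well-posedness, which holds here because the strongest nonlinearity $|u|^3u$ is $H^1$-subcritical for $n\leq 3$.

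First I would analyze the mass-preserving dilation $\phi^\lambda$ from \eqref{define of v^lambda} and set $j(\lambda):=S(\phi^\lambda)$. Since $\phi$ solves \eqref{SW algebraic profile} we have $K(\phi)=0$, and because $P(\phi)=\partial_\lambda S(\phi^\lambda)|_{\lambda=1}=\langle S'(\phi),\partial_\lambda\phi^\lambda|_{\lambda=1}\rangle$ and $S'(\phi)=0$, also $P(\phi)=0$; thus $j'(1)=0$. The key computation is the sign of
\[
j''(1)=\partial_\lambda P(\phi^\lambda)\big|_{\lambda=1}=2\norm{\nabla\phi}_{L^2}^2-\tfrac{n^2 a_1}{12}\norm{\phi}_{L^3}^3-\tfrac{n^2 a_2}{4}\norm{\phi}_{L^4}^4-\tfrac{9n^2 a_3}{20}\norm{\phi}_{L^5}^5.
\]
Writing $c_i$ for the three nonlinear terms and eliminating $\norm{\nabla\phi}_{L^2}^2$ through the two relations $K(\phi)=0$ and $P(\phi)=0$, the quantity $j''(1)$ reduces to a linear combination of $c_1,c_2,c_3$ whose sign I can control from the constraints DDF/DFF. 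I expect $j''(1)<0$ to hold automatically when $n=2,3$ (both sign patterns), whereas for $n=1$ the two relations leave one free ratio and $j''(1)<0$ becomes an explicit inequality on $c_2/c_3$; using the explicit even profile of Proposition \ref{pro1} this is precisely the threshold $a_2<\frac{32}{15\sqrt6}$ in the DFF case. Granting $j''(1)<0$, for $\lambda>1$ close to $1$ one gets $P(\phi^\lambda)<0$ and, since $j'(\lambda)=\lambda^{-1}P(\phi^\lambda)<0$ there, also $S(\phi^\lambda)<S(\phi)=\mu$; hence $\phi^\lambda\in\mathcal{B}$ while $\norm{\phi^\lambda-\phi}_{H^1}\to0$ as $\lambda\to1^+$.

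Next I would establish the variational lower bound $S(v)\geq\mu$ for every $v\neq0$ with $P(v)=0$ (a companion to Proposition \ref{pro variational characteristic}, obtained by minimizing $S$ under the Pohozhaev constraint and identifying the value with $\mu$). Combined with conservation of $S$, this makes $\mathcal{B}$ invariant: a solution with $S(u_0)<\mu$ can never reach $\{P=0\}$, so $P(u(t))<0$ for all $t$, and in fact $P(u(t))\leq-\delta<0$ uniformly once $S(u_0)\leq\mu-\delta'$. For the ejection I would use the virial identity $\frac{d^2}{dt^2}\int|x|^2|u|^2\,dx=8P(u(t))$; because $\phi$ has only algebraic decay, $|x|\phi\notin L^2$, so I replace $\int|x|^2|u|^2$ by a truncated $V_R(t)=\int\psi_R|u(t)|^2\,dx$ with $\psi_R(x)=R^2\psi(x/R)$, whose second derivative is $8P(u(t))$ plus error terms supported in $\{|x|\gtrsim R\}$. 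As long as $u(t)$ stays in a small $H^1$-neighborhood of the compact set $\mathcal{G}$ these errors are controlled by the tail of $\phi$ and can be made smaller than $4\delta$ by choosing $R$ large; then $V_R''(t)\leq-4\delta<0$, which is incompatible with $V_R\geq0$ over a long time interval. Hence $u(t)$ must leave the neighborhood in finite time, yielding instability.

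The crux is twofold. First, the sign of $j''(1)$ in the borderline dimension $n=1$: the relations $K(\phi)=0,P(\phi)=0$ no longer determine it, so I must compute $\norm{\nabla\phi}_{L^2}^2,\norm{\phi}_{L^3}^3,\norm{\phi}_{L^4}^4,\norm{\phi}_{L^5}^5$ from the explicit profile of Proposition \ref{pro1} and track exactly where $\frac{32}{15\sqrt6}$ appears; this explicit quadrature is the most parameter-sensitive step. Second, on the dynamical side, the error control in the truncated virial is genuinely nontrivial precisely because $\phi$ decays only algebraically, so the tail $\int_{|x|>R}(|\nabla\phi|^2+|\phi|^{3})$ decays slowly in $R$; I must show it is still beaten by the uniform gap $\delta$ coming from $S(u_0)<\mu$, uniformly in time while the solution remains near $\mathcal{G}$. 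Everything else---the scaling manipulation, the subcritical well-posedness, and the ejection bookkeeping---is routine once these two points are settled.
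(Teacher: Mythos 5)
Your overall architecture (concavity of $\lambda\mapsto S(\phi^\lambda)$ at $\lambda=1$, trapping in $\mathcal{B}$, virial ejection) is the paper's, and your formula for $j''(1)$ agrees with the paper's $\partial_\lambda^2S(\phi^\lambda)\vert_{\lambda=1}$ once $P(\phi)=0$ is used. However, the step your trapping argument rests on --- the ``companion lemma'' $S(v)\geq\mu$ for every $v\neq 0$ with $P(v)=0$, invoked to make $\mathcal{B}$ flow-invariant --- is a genuine gap: it is \emph{false} for $n=1$. Eliminating the $L^5$ term, on $\{P(v)=0\}$ one has for $n=1$
\begin{equation*}
S(v)=S(v)-\tfrac{2}{3}P(v)=-\tfrac{1}{6}\norm{\nabla v}_{L^2}^2+\tfrac{2}{9}\norm{v}_{L^3}^3-\tfrac{a_2}{12}\norm{v}_{L^4}^4,
\end{equation*}
and taking $v=t\,w^{\lambda}$ with $t=c\lambda^{1/6}$, $\lambda\to\infty$, and $c$ adjusted near $(10\norm{\nabla w}_{L^2}^2/(3\norm{w}_{L^5}^5))^{1/3}$ so that $P(v)=0$, the gradient term dominates and $S\to-\infty$ along the Pohozhaev manifold. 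Hence energy conservation alone does not prevent a solution with $S(u_0)<\mu$ from reaching $\{P=0\}$, and your uniform bound $P(u(t))\leq-\delta$ does not follow. (Even for $n=2,3$, where the analogous reduction has favorable signs, the identity $\inf\{S:P=0\}=\mu$ is asserted rather than proved, and for mixed-sign triple powers it is not routine.) The paper avoids this entirely with a \emph{local} argument (Lemma \ref{lemma 5.2}): since $\partial_\lambda K(\phi^\lambda)\vert_{\lambda=1}<0$, the implicit function theorem produces, for $v$ in a small tube $\mathcal{N}_{\varepsilon_1}$ around the orbit, a $\Lambda(v)$ near $1$ with $K(v^{\Lambda(v)})=0$, and the local concavity of $\lambda\mapsto S(v^\lambda)$ gives $\mu\leq S(v^{\Lambda(v)})\leq S(v)+(\Lambda(v)-1)P(v)$; this yields $P(u(t))\leq-m$ for as long as $u(t)$ stays in the tube, which is all the ejection argument needs. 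You should replace your global lemma by this local one.

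Two secondary points. First, for the $n=1$ DFF threshold you propose an explicit quadrature of the profile; the paper instead obtains $\frac{32}{15\sqrt6}$ from the pointwise inequality $a\norm{\phi}_{L^3}^3+b\norm{\phi}_{L^5}^5\geq 2\sqrt{ab}\,\norm{\phi}_{L^4}^4$ applied to \eqref{equality new}, so the constant is a by-product of AM--GM and not of the exact integrals; your quadrature would produce a different (a priori sharper) constant and is far more computation than is needed, so the claim that it yields ``precisely'' $\frac{32}{15\sqrt6}$ is unlikely to be literally correct. Second, for the virial step the paper does not truncate the weight: it truncates the \emph{initial data} ($\chi_{R(\lambda)}\phi^{\lambda}\in\Sigma$, Lemma \ref{lm chosing sequence}) and then uses the exact identity $\frac{d^2}{dt^2}\norm{xu(t)}_{L^2}^2=8P(u(t))$, thereby avoiding the tail estimates you correctly flag as delicate (they are, since the translation center can drift while the solution remains in the tube).
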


\begin{remark}
In \cite{FuHa21}, the authors proved instability of algebraic standing waves in the case $a_1=-1$, $a_2=0$, $a_3=1$. Our result can be seen as a small extension of this result for $a_2\neq 0$. We predict that a similar result holds in the case of multiple nonlinearity of the form $-|u|u+a_1|u|^{p_1}u+...+a_m|u|^{p_m}u+|u|^3u$, for $1<p_1<...<p_m<3$ are given and some conditions on $a_1,...,a_m$. More general, we may expect a similar result with more general nonlinearity of form $-|u|^{p_1}+a_2|u|^{p_2}u+...+a_m|u|^{p_m}+|u|^{p_{m+1}}$, where $0<p_1<p_2<...<p_{m+1}<\frac{4}{n-2}$ . However, we do not consider these cases in this paper.
\end{remark}

When the highest power is $L^2(\R^n)$-supercritical, standing waves usually are unstable by blow up (see \cite{Co09} for focusing simple power nonlinearity, \cite{FuHa21} for double power nonliearity). In our case, this is conserved. We prove the following result.
\begin{theorem}
\label{theorem 2}
Let $n=2,3$, D*F and $a_2>-\varepsilon$ when $n=3$, for $\varepsilon>0$ small enough. Let $\omega>0$ and $\phi_{\omega}$ be a ground state of \eqref{triple power equation} i.e $\phi_{\omega}$ is a minimizer of the following variational problem
\[
\inf\{S_{\omega}(v): v\in H^1(\R^n)\setminus \{0\},K_{\omega}(v)=0\},
\] 
where
\begin{align}
S_{\omega}(v)&:=\frac{1}{2}\norm{\nabla v}^2_{L^2}+\frac{\omega}{2}\norm{v}^2_{L^2}+\frac{1}{3}\norm{v}^3_{L^3}-\frac{a_2}{4}\norm{v}^4_{L^4}-\frac{1}{5}\norm{v}^5_{L^5}, \label{S omega}\\
K_{\omega}(v)&:=\left\langle S_{\omega}'(v),v\right\rangle= \norm{\nabla v}^2_{L^2}+\omega\norm{v}^2_{L^2}+\norm{v}^3_{L^3}a_2\norm{v}^4_{L^4}-\norm{v}^5_{L^5}.\label{K omega}
\end{align}
Then the standing waves $e^{i\omega t}\phi_{\omega}(x)$ of \eqref{triple power equation} are unstable by blow up. 
\end{theorem} 

The rest of this paper is organized as follows. In Section \ref{sec1}, we find the region of parameters $a_1,a_2,a_3$ in which there exist solutions of the elliptic equation \eqref{SW algebraic profile}. Specially, in one dimension, all solution of \eqref{SW algebraic profile} are algebraic standing waves. In Section \ref{sec2}, we establish the variational characterization of solutions given in Section \ref{sec1}. The existence of algebraic standing waves in high dimensions is also proved in section \ref{sec2}. In Section \ref{sec3}, we prove instability of algebraic standing waves Theorem \ref{Theorem 1} and instability by blow up of standing waves in the case of positive frequency Theorem \ref{theorem 2}.     

\section{Existence of solution of the elliptic equation}
\label{sec1}
First, we find the region of parameters $a_1$, $a_2$, $a_3$ in which there exist solutions of \eqref{SW algebraic profile}.  
\subsection{In dimension one}

Let $n=1$. To study the existence of algebraic standing waves, we use the following lemma (see \cite{BeLi83}, \cite[Proposition 2.1]{LiTsIa21})

\begin{lemma}\label{old version of existence}
Let $g$ be a locally Lipschitz continuous function with $g(0)=0$ and let $G(t)=\int_0^tg(s)\,ds$. A necessary and sufficient condition for the existence of a solution $\phi$ of the problem 
\begin{equation}
\label{equation 1}
\begin{cases}
\phi\in C^2(\R), \quad \lim_{x\rightarrow\pm\infty}\phi(x)=0, \quad \phi(0)>0,\\
\phi_{xx}+g(\phi)=0,
\end{cases}
\end{equation}
is that $c=\inf\left\{t>0:G(t)=0\right\}$ exists, $c>0$, $g(c)>0$. 
\end{lemma}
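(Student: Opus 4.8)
The plan is to exploit the fact that \eqref{equation 1} is an autonomous second-order ODE and therefore admits a first integral. Multiplying $\phi_{xx}+g(\phi)=0$ by $\phi_x$ and integrating, the energy
\[
E(x):=\tfrac12\phi_x(x)^2+G(\phi(x))
\]
is constant along any solution. I would first pin down its value. Since $\phi(x)\to 0$ as $x\to\pm\infty$ and $G$ is continuous with $G(0)=0$, we have $G(\phi(x))\to 0$, so $\phi_x(x)^2=2E-2G(\phi(x))\to 2E$. A nonzero limit for $\phi_x$ is incompatible with $\phi\to0$, while $\phi_x^2\geq 0$ rules out $E<0$; hence $E=0$, i.e.
\[
\phi_x^2=-2G(\phi).
\]
All of the analysis reduces to this phase-plane relation.

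For \emph{necessity}, suppose a solution $\phi$ exists. Using translation invariance I may place its (positive, by $\phi(0)>0$) maximum $M=\max\phi$ at the origin, so $\phi(0)=M>0$ and $\phi_x(0)=0$; the identity $\phi_x^2=-2G(\phi)$ then gives $G(M)=0$. On the decreasing branch $x>0$ one has $\phi_x<0$ and $\phi_x^2=-2G(\phi)>0$, so $G<0$ on $(0,M)$; therefore $M$ is precisely the smallest positive zero of $G$, i.e. $c=\inf\{t>0:G(t)=0\}=M>0$ exists and is positive. Finally, at the maximum $\phi_{xx}(0)=-g(M)\leq 0$, so $g(c)\geq 0$; and $g(c)=0$ is impossible, since then the constant function $\phi\equiv c$ would solve the same Cauchy problem $\phi(0)=c,\ \phi_x(0)=0$, contradicting uniqueness for the locally Lipschitz equation (the true solution is nonconstant). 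Hence $g(c)>0$.

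For \emph{sufficiency}, assume $c>0$ exists with $g(c)>0$. Since $g(c)=G'(c)>0$ and $c$ is the smallest positive zero of $G$, continuity forces $G<0$ on $(0,c)$ and $G$ to cross upward at $c$. I would construct the solution by quadrature, solving $\phi_x=-\sqrt{-2G(\phi)}$ for $x\geq 0$ with $\phi(0)=c$, equivalently
\[
x=\int_{\phi(x)}^{c}\frac{ds}{\sqrt{-2G(s)}}.
\]
Two endpoint analyses are needed. Near $s=0$, the Lipschitz bound $g(0)=0$ yields $-2G(s)\leq L s^2$, so the integrand is $\geq (\sqrt{L}\,s)^{-1}$ and the integral diverges: the orbit reaches $0$ only as $x\to+\infty$, giving the correct decay. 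Near $s=c$, since $G(s)=g(c)(s-c)+o(s-c)$ we get $-2G(s)\sim 2g(c)(c-s)$, so the integrand is $\sim\big(2g(c)(c-s)\big)^{-1/2}$ and the integral converges: the peak $c$ is attained at finite $x$, a regular turning point. Extending by even reflection and invoking local existence/uniqueness for the Lipschitz ODE at the Cauchy data $\phi(0)=c$, $\phi_x(0)=0$ (which forces $\phi_{xx}(0)=-g(c)<0$) produces a global $C^2$ even homoclinic solution of \eqref{equation 1}.

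I expect the main obstacle to be the two endpoint estimates of the quadrature integral: establishing divergence at $0$ (so the solution has infinite extent and genuinely decays rather than crossing zero in finite distance) and convergence at $c$ (so the turning point is regular), together with the $C^2$ matching at the peak. These are exactly the points where the hypotheses $g$ locally Lipschitz with $g(0)=0$ and $g(c)>0$ enter, and I would handle the regularity at the turning point by identifying the quadrature solution with the unique local solution of the Cauchy problem rather than by differentiating the implicit integral directly.
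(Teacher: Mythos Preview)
Your argument is correct and is the standard phase-plane/first-integral proof of this classical result. Note, however, that the paper does not give its own proof of this lemma: it is quoted from the literature (Berestycki--Lions \cite{BeLi83} and \cite[Proposition 2.1]{LiTsIa21}) and used as a black box. So there is no in-paper proof to compare against; your route is essentially the one in those references.

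One small point worth tightening in the necessity direction: you assert that on $x>0$ the solution is strictly decreasing, which is what lets you conclude $G<0$ on the whole interval $(0,M)$ and hence $M=c$. This monotonicity is true but not immediate from ``maximum at $0$''; one should rule out an interior critical point $x_1>0$ with $\phi_x(x_1)=0$ and $\phi(x_1)=m\in(0,M)$. The clean way is to note that such a point would force $G(m)=0$, and then the reflection symmetry $\phi(x_1+t)=\phi(x_1-t)$ (from autonomy and uniqueness) would make $\phi$ periodic rather than decaying to $0$. With that remark added, the necessity half is complete, and your sufficiency half via quadrature with the two endpoint estimates is exactly right.
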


Using Lemma \ref{old version of existence}, we have the following result.
\begin{lemma}\label{uniqueness of algebraic SW}
Let $g(u)=a_1u^2+a_2u^3+a_3u^4$ be such that $g$ satisfies the assumptions of Lemma \ref{old version of existence} for some $a_1,a_2,a_3\in\R$. Then there exists a positive solution $\phi$ of \eqref{equation 1}. Moreover, all complex valued solutions of \eqref{equation 1} are of form:
\[
e^{i\theta_0}\phi(x-x_0),
\] 
for some $\theta_0,x_0\in\R$.
\end{lemma}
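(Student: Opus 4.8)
The plan is to split the statement into three pieces: existence of a positive profile, uniqueness of the positive decaying profile up to translation, and the reduction of an arbitrary complex-valued solution to a phase rotation of a positive real one. Throughout I read ``complex valued solutions of \eqref{equation 1}'' as solutions $\psi\colon\R\to\C$ of the natural complexification $\psi''+f(|\psi|)\psi=0$, where $f(s):=a_1 s+a_2 s^2+a_3 s^3$, so that $f(t)t=g(t)$ for $t\geq 0$; this is exactly \eqref{SW algebraic profile} in one dimension.

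First I would establish existence and positivity. Existence of a solution $\phi$ with $\phi(0)>0$ and $\lim_{x\to\pm\infty}\phi(x)=0$ is immediate from Lemma \ref{old version of existence}, since its hypotheses hold by assumption. To upgrade this to positivity I would use the first integral: multiplying $\phi_{xx}+g(\phi)=0$ by $\phi_x$ and integrating yields the conserved energy $E=\tfrac12\phi_x^2+G(\phi)$, and the decay conditions force $E\equiv 0$, i.e. $\tfrac12\phi_x^2=-G(\phi)$. If $\phi$ vanished at some $x_1$, then $E\equiv 0$ together with $G(0)=0$ would give $\phi_x(x_1)=0$, whence $\phi\equiv 0$ by uniqueness for the initial value problem, contradicting $\phi(0)>0$. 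So $\phi$ never vanishes and, being continuous with $\phi(0)>0$, is strictly positive, taking values in $(0,c]$ with maximum $c$ attained where $G(c)=0$.

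Next I would prove uniqueness of the positive decaying profile up to translation via phase-plane analysis in $(\phi,\phi_x)$. Every positive decaying solution lies on the zero energy level $\{\tfrac12 p^2+G(q)=0\}$; since $c=\inf\{t>0:G(t)=0\}$ and $g(c)>0$, this level set contains a single homoclinic loop based at the origin, which $\phi$ traverses. Two solutions supported on the same orbit differ only by the $x$-parametrization, i.e. by a translation $x\mapsto x-x_0$; choosing $x_0$ so that the maximum sits at the origin yields the even representative. Hence all positive decaying solutions are $\phi(\cdot-x_0)$.

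The main obstacle is the complex reduction. For $\psi=u+iv$ solving $\psi''+f(|\psi|)\psi=0$, I would observe that $u$ and $v$ solve the \emph{same} scalar linear equation $w''+q(x)w=0$ with coefficient $q(x)=f(|\psi(x)|)$. Their Wronskian $W=uv'-vu'$ is then constant (there is no first-order term), and the decay of $\psi,\psi'$ forces $W\equiv 0$. Vanishing Wronskian of two solutions of the same second-order linear ODE makes them linearly dependent, so $v=\kappa u$ for a constant $\kappa$ (or $u\equiv 0$), giving $\psi=(1+i\kappa)u=e^{i\theta_0}\rho$ with $\rho$ real-valued and $\theta_0$ constant. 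Then $\rho$ is a real decaying solution of $\rho''+a_1|\rho|\rho+a_2|\rho|^2\rho+a_3|\rho|^3\rho=0$, and the same $E\equiv 0$ argument shows it cannot change sign (a zero would force $\rho_x=0$ there, hence $\rho\equiv 0$); absorbing a possible global sign into $\theta_0$, I may take $\rho>0$. Applying the previous step gives $\rho=\phi(\cdot-x_0)$, so $\psi=e^{i\theta_0}\phi(\cdot-x_0)$. I expect the delicate points to be precisely this extraction of a constant phase from the complex structure (the Wronskian/linear-dependence step, which conveniently sidesteps any worry about zeros of $|\psi|$) and the exclusion of sign changes in the reduced real profile; the remaining ingredients are routine homoclinic phase-plane arguments.
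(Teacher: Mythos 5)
Your proposal is correct. The existence, positivity, and uniqueness-up-to-translation steps coincide in substance with the paper's: both rest on the zero-energy first integral $\tfrac12\phi_x^2+G(\phi)=0$, the ODE-uniqueness argument excluding zeros, and the identification of the maximum value of the profile with the first positive zero $c$ of $G$ (your homoclinic-loop phrasing and the paper's ``every positive solution has a critical point at which it takes the value $c$'' are the same argument; your connectedness remark even absorbs the paper's separate discussion of the case $\phi(x_0)=d>c$). Where you genuinely diverge is the complex reduction. The paper follows Cazenave's Theorem 8.1.4: it first shows $|u|>0$, writes $u=\rho e^{i\theta}$ with a $C^2$ polar lift, derives the conserved quantity $\rho^2\theta_x=\tilde K$ from the imaginary part of the equation, and kills $\tilde K$ using the boundedness of $|u_x|$ together with $\rho\to 0$. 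You instead observe that $\Re\psi$ and $\Im\psi$ solve the \emph{same} scalar linear equation $w''+f(|\psi|)w=0$, so their Wronskian is constant, vanishes at infinity, and forces linear dependence, i.e.\ a global constant phase. Your route is shorter and, as you note, does not require establishing $|\psi|>0$ before decomposing, nor the regularity of the polar lift; the paper's route is the classical one and comes off the shelf from the cited reference. One small point common to both arguments: concluding that the energy constant vanishes uses not only $\psi\to 0$ but also $\psi_x\to 0$ at infinity, which should be extracted from the equation (for instance, $\psi_{xx}\to 0$ together with $\psi\to 0$ yields $\psi_x\to 0$); this is routine but deserves a line in either write-up.
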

\begin{proof}
By Lemma \ref{old version of existence}, there exists a real valued solution $\phi$ of \eqref{equation 1}. We have
\begin{equation}\label{eq2}
\phi_{xx}+a_1\phi^{2}+a_2\phi^{3}+a_3\phi^{4}=0.
\end{equation}
Since $\lim_{x\rightarrow\pm\infty}\phi(x)=0$, there exists $x_0$ such that $\phi_x(x_0)=0$. Multiplying two sides of \eqref{eq2} by $\phi_x$ and noting that $\lim_{x\rightarrow\infty}\phi(x)=0$ we obtain
\begin{equation}\label{eq3}
\frac{1}{2}\phi_x^2+\frac{a_1}{3}\phi^{3}+\frac{a_2}{4}\phi^{4}+\frac{a_3}{5}\phi^{5}=0.
\end{equation} 
We see that $\phi$ is not vanishing on $\R$. Indeed, if $\phi(x_1)=0$ for some $x_1\in\R$ then $\phi_x(x_1)=0$ by \eqref{eq3}. Thus, $\phi \equiv 0$ by uniqueness of solutions of \eqref{eq3} which is a contradiction. Then, we can assume that $\phi>0$. 

The value $\phi(x_0)$ is a positive solution of $G(u)=\frac{a_1}{3}u^{3}+\frac{a_2}{4}u^{4}+\frac{a_3}{5}u^{5}=0$. Since $g$ satisfies the condition in Lemma \ref{old version of existence}, it follows that $G(u)=0$ has a first positive solution $c$ such that $g(c)>0$. If $\phi(x_0) \neq c$ then $G$ has another positive zero $d>c$ such that $d=\phi(x_0)$. By continuity of $\phi$, there exists $x_1>x_0$ such that $\phi(x_1)=c$ and by \eqref{eq3} $\phi_x(x_1)=0$. This conclusion implies that every positive solution of \eqref{eq2} has a critical point such that the value of solution at this point equals to $c$.

Let $u$ be a complex valued solution of \eqref{equation 1}. We prove that $u=e^{i\theta_0}\phi(x-x_0)$, for some $\theta_0,x_0\in\R$. We use similar arguments as in \cite[Theorem 8.1.4]{Ca03}. Multiplying the equation by $\overline{u}_x$ and taking real part, we obtain:
\[
\frac{d}{dx}\left(\frac{1}{2}|u_x|^2+\frac{a_1}{3}|u|^{3}+\frac{a_2}{4}|u|^{4}+\frac{a_3}{5}|u|^{5}\right) =0.
\] 
Thus,
\[
\frac{1}{2}|u_x|^2+\frac{a_1}{3}|u|^{3}+\frac{a_2}{4}|u|^{4}+\frac{a_3}{5}|u|^{5}=K.
\]
Using $\lim_{x\rightarrow\pm\infty}u(x)=0$ we have $K=0$. In particular, $|u|>0$. Indeed, if $u$ vanishes then $u_x$ vanish at the same point, hence, $u \equiv 0$. Therefore, we may write $u=\rho e^{i\theta}$, where $\rho>0$ and $\rho,\theta\in C^2(\R)$. Substituting $u=\rho e^{i\theta}$ in \eqref{equation 1} we have $2\rho_x\theta_x+\rho\theta_{xx}=0$ which implies there exists $\tilde{K} \in\R$ such that $\rho^2\theta_x=\tilde{K}$ and so $\theta_x=\frac{\tilde{K}}{\rho^2}$. Moreover, since $|u_x|$ is bounded, it follows that $\rho^2\theta_x^2$ is bounded. Thus, $\frac{\tilde{K}^2}{\rho^2}$ is bounded. Since $\rho(x) \rightarrow 0$ as $x \rightarrow \infty$, we have $\tilde{K} =0$. Thus, since $\rho>0$ we have $\theta \equiv \theta_0$ for some $\theta_0\in\R$. Thus $u=e^{i\theta_0}\rho$. Since $\rho$ is a positive solution of \eqref{eq2}, there exists $x_2\in\R$ such that $\rho(x_2)=c$ and $\rho_x(x_2)=0$. Thus, by uniqueness of solution of \eqref{eq2}, there exists $x_3 \in\R$ such that $\rho(x)=\phi(x-x_3)$ and $u=e^{i\theta_0}\phi(x-x_3)$. This implies the desired result. 
\end{proof}

Moreover, we have the following result.
\begin{lemma}\label{SW algebraic in H1}
Let $g$ and $\phi$ be as in Lemma \ref{uniqueness of algebraic SW}. Then $\phi \in H^1(\R)$.
\end{lemma}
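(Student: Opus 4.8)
The plan is to read off the algebraic decay of $\phi$ at infinity from the first integral \eqref{eq3}, and then check directly that $\phi$ and $\phi_x$ are square integrable. From \eqref{eq3} the solution satisfies
\[
\phi_x^2 = -2G(\phi), \qquad G(t) := \frac{a_1}{3}t^3 + \frac{a_2}{4}t^4 + \frac{a_3}{5}t^5 ,
\]
and $\phi$ is continuous, positive, with $\phi(x)\to 0$ as $x\to\pm\infty$; in particular $\phi$ is bounded and, by the structure established in the proof of Lemma \ref{uniqueness of algebraic SW}, attains a single maximum from which it decreases monotonically to $0$ on each side. After a translation we may assume this maximum is at $0$ and $\phi$ is even, so it is enough to study $x\to+\infty$.

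In the cases considered here $a_1<0$ (indeed, existence of a positive decaying solution forces $G<0$ near $0$, hence $a_1<0$ since $|a_1|=1$), so the cubic term dominates $G$ near the origin:
\[
-2G(t)=-\tfrac{2a_1}{3}\,t^3\bigl(1+O(t)\bigr),\qquad -\tfrac{2a_1}{3}>0 .
\]
Consequently there are constants $0<c_1\le c_2$ and $\delta>0$ such that $c_1 t^3\le -2G(t)\le c_2 t^3$ for $0<t\le\delta$.

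First I would convert the lower bound into a decay rate. For $x$ large enough that $0<\phi(x)\le\delta$, monotonicity gives $\phi_x=-\sqrt{-2G(\phi)}\le -\sqrt{c_1}\,\phi^{3/2}$, whence
\[
\frac{d}{dx}\,\phi^{-1/2}=-\tfrac12\phi^{-3/2}\phi_x\ge \tfrac12\sqrt{c_1}.
\]
Integrating from a fixed $x_1$ gives $\phi^{-1/2}(x)\ge \tfrac12\sqrt{c_1}\,(x-x_1)$ for $x\ge x_1$, i.e. $\phi(x)\le C(1+x)^{-2}$ for large $x$; by evenness $\phi(x)\le C(1+|x|)^{-2}$ on all of $\R$ after enlarging $C$ to cover the bounded central region. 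This bound yields $\int_\R\phi^2\,dx\lesssim \int_\R(1+|x|)^{-4}\,dx<\infty$, so $\phi\in L^2(\R)$.

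Finally, the upper bound gives $\phi_x^2=-2G(\phi)\le c_2\phi^3\le c_2C^3(1+|x|)^{-6}$ for large $|x|$, while $\phi_x$ is continuous and hence square integrable on compact sets; thus $\phi_x\in L^2(\R)$ and $\phi\in H^1(\R)$. The one genuinely delicate point is the passage from the first integral to the quantitative $O(|x|^{-2})$ decay, for which the sign $a_1<0$ — making $-2G(t)$ positive and comparable to $t^3$ near $0$ — is exactly what is needed; the integrability conclusions are then immediate.
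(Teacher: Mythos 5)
Your proof is correct and follows essentially the same route as the paper: both arguments use the first integral $\tfrac12\phi_x^2+G(\phi)=0$ together with $a_1<0$ and the eventual monotone decrease of $\phi$ to derive the differential inequality $\phi_x+c\,\phi^{3/2}\le 0$, integrate it to obtain the $O(|x|^{-2})$ decay, and then read off $\phi\in L^2$ and $\phi_x\in L^2$ from that decay and the first integral. Your version merely makes the paper's ``$\approx$'' step rigorous with explicit two-sided bounds $c_1t^3\le -2G(t)\le c_2t^3$, which is a welcome refinement but not a different argument.
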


\begin{proof}
Firstly, since $g$ satisfies the assumption of Lemma \ref{old version of existence}, we have $a_1<0$ (see the arguments in the proof of Proposition \ref{pro1}). As in the proof of Lemma \ref{uniqueness of algebraic SW}, up to a translation, we may assume that $\phi_x(0)=0$ and let $c=\phi(0)$. Then $\phi$ is an even function of $x$. Furthermore, $\phi$ satisfies 
\begin{equation}\label{eq4}
\frac{1}{2}\phi_x^2+G(\phi)=0.
\end{equation}
Moreover, $\phi_{xx}(0)=-g(\phi(0))=-g(c)<0$. Therefore, there exists $a>0$ such that $\phi_x<0$ on $(0,a)$. We claim that $a=\infty$. Otherwise, there would exists $b>0$ such that $\phi_x<0$ on $(0,b)$ and $\phi_x(b)=0$. Thus, $\phi(b)<c$ is a positive zero of $G$. This is a contradiction since $c$ is the first positive solution of $G$. Hence, $\phi_x<0$ on $(0,\infty)$. Thus, there exists $0\leq l<c$ such that $\lim_{x \rightarrow\infty}\phi(x)=l$. In particular, there exists $x_m \rightarrow \infty$ such that $\phi_x(x_m) \rightarrow 0$ as $m\rightarrow\infty$. Passing to the limit in \eqref{eq4} we have $G(l)=0$ and hence $l=0$ by definition of $c$. Therefore $\phi$ decreases to $0$, as $x\rightarrow\infty$. Thus, from \eqref{eq4}, for $|x|$ large enough, we have
\[
\phi_x^2 \approx -\frac{a_1}{3} \phi^{3}.
\]    
Then 
\[
-\phi_x \approx c \phi^{\frac{3}{2}}, \text{ for some } c>0.
\]
Thus, for $|x|$ large enough, we have
\[
0\geq \phi_x+c\phi^{\frac{3}{2}}.
\]
It follows that $\phi\leq \frac{1}{(cx+d)^{2}}$ for some $c,d>0$. Hence $\phi \in L^1(\R) \cap L^{\infty}(\R)$, especially $\phi\in L^2(\R)$. Combining this and \eqref{eq4}, we obtain that $\phi_x \in L^2(\R)$. Thus, $\phi \in H^1(\R)$, this completes the proof of Lemma \ref{SW algebraic in H1}.  
\end{proof}

Now, we comeback to the proof of Proposition \ref{pro1}.
\begin{proof}[Proof of Proposition \ref{pro1}]
A solution of \eqref{SW algebraic profile} in the space $X$ satisfies
\begin{equation}\label{eq algebraic standing wave}
u_{xx}+g(u)=0, \quad u\in C^2(\R), \quad \text{ and } \lim_{x\rightarrow\pm\infty}u(x)=0,
\end{equation}
From Lemma \ref{old version of existence}, the necessary condition for existence of solutions of \eqref{eq algebraic standing wave} is $a_1<0$. Indeed, let $c$ is the first positive root of $G(u)$ then $G'(c)=g(c)>0$. Thus, $G$ do not change sign on $(0,c)$ and is increasing in a neighborhood of $c$. It follows that $G<0$ on $(0,c)$ and hence $a_1<0$. \\
To conclude the existence of solution of \eqref{eq algebraic standing wave}, we consider the three cases DDF, DFF, DFD. In the case DDD we have $G<0$ on $(0,\infty)$, therefore there is no solution of \eqref{eq algebraic standing wave}. \\
In the case DDF (i.e $a_1=-1$, $a_2<0$, $a_3=1$), we have
\begin{align*}
g(s)&= -s^{2}+a_2 s^{3} +s^{4},\\
G(s)&=-\frac{1}{3}s^{3}+\frac{a_2}{4}s^{4}+\frac{1}{5}s^{5}.
\end{align*}    
Thus ,
\[
c=\frac{-\frac{a_2}{4}+\sqrt{\frac{a_2^2}{16}+\frac{4}{15}}}{\frac{2}{5}},
\]       
and $g(c)=c^2(c^2+a_2 c-1)$. It easy to check that $c$ is larger than the largest root of $x^2+a_2 x-1$. Thus, $g(c)>0$. It follows that in case DDF, there exists a solution of \eqref{eq algebraic standing wave}.\\
By similar arguments, in the case DFF, \eqref{eq algebraic standing wave} has a solution. In the case DFD, \eqref{eq algebraic standing wave} has a solution if and only if $a_2>\frac{8}{\sqrt{15}}$. \\
Let $\phi$ be a solution of \eqref{eq algebraic standing wave}. From Lemma \ref{uniqueness of algebraic SW} all solution of \eqref{eq algebraic standing wave} are of the form $e^{i\theta}\phi(x-x_0)$, and belong to $H^1(\R)$ by Lemma \ref{SW algebraic in H1}. Thus, they are all algebraic standing waves of \eqref{triple power equation}. This completes the proof of Proposition \ref{pro1}. 
\end{proof}

\subsection{In higher dimensions}

In this section, we prove existence and uniqueness of a radial positive solution of \eqref{SW algebraic profile} when $a_1=-1$, $a_3=1$ and $n=2,3$. The existence result is a consequence of the following theorem.
\begin{theorem}[\cite{BeLiPe81},Theorem \RNum{1}.1]\label{exstence of SW in high dimensions}
Let $g$ be a locally Lipschitz continuous function from $\R^+$ to $\R$ with $g(0)=0$, satisfying
\begin{itemize}
\item[(1)] $\alpha=\inf\{\zeta>0,g(\zeta) \geq 0\}$ exists, and $\alpha>0$.
\item[(2)] There exists a number $\zeta>0$ such that $G(\zeta)>0$, where
\[
G(t)=\int_0^tg(s)\,ds.
\]
Define $\zeta_0=\inf\{\zeta>0,G(\zeta)>0\}$. Then, $\zeta_0$ exists, and $\zeta_0>\alpha$.
\item[(3)] $\lim_{s\downarrow\alpha}\frac{g(s)}{s-\alpha}>0$.
\item[(4)] $g(s)>0$ for $s \in (\alpha,\zeta_0]$.
Let $\beta=\inf\{\zeta>\zeta_0,g(\zeta)=0\}$. Then, $\zeta_0<\beta\leq \infty$.
\item[(5)] If $\beta=\infty$ then $\frac{g(s)}{s^l}=0$, with $l<\frac{n+2}{n-2}$, (If $n=2$, we may choose for $l$ just any finite real number).
\end{itemize} 
Then there exists a number $\zeta\in (\zeta_0,\beta)$ such that the solution $u\in C^2(\R^+)$ of the Initial Value problem
\begin{equation*}
\begin{cases}
-u''-\frac{n-1}{r}u'=g(u), \text{ for } r>0,\\
u(0)=\zeta, \ u'(0)=0
\end{cases}
\end{equation*}
has the properties: $u>0$ on $\R^+$, $u'<0$ on $\R^+$ and 
\[
\lim_{r\rightarrow\infty}u(r)=0.
\] 
\end{theorem}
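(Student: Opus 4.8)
The plan is to prove this by the classical ODE shooting method of Berestycki--Lions--Peletier, treating the initial height $\zeta=u(0)$ as a shooting parameter and selecting the value for which the radial trajectory decays monotonically to zero. First I would set up the local theory for the singular initial value problem. Writing the equation as $(r^{n-1}u')'=-r^{n-1}g(u)$ and integrating twice shows the IVP is equivalent to the integral equation $u(r)=\zeta-\int_0^r s^{1-n}\int_0^s t^{n-1}g(u(t))\,dt\,ds$; since $g$ is locally Lipschitz, a contraction-mapping argument gives, for each $\zeta>0$, a unique $C^2$ solution $u_\zeta$ on a maximal interval, depending continuously on $\zeta$, with $u_\zeta'(0)=0$ and $u_\zeta''(0)=-g(\zeta)/n$. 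The basic tool throughout is the energy $E_\zeta(r):=\tfrac12 u_\zeta'(r)^2+G(u_\zeta(r))$, which satisfies $E_\zeta'(r)=-\tfrac{n-1}{r}u_\zeta'(r)^2\leq 0$, so $E_\zeta$ is non-increasing with $E_\zeta(0)=G(\zeta)$.

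Next I would classify the shooting data $\zeta>\zeta_0$ into three mutually exclusive types according to the global behaviour of $u_\zeta$: type (i), the trajectory \emph{crosses} zero, i.e.\ $u_\zeta(R_\zeta)=0$ at some finite $R_\zeta$ with $u_\zeta'(R_\zeta)<0$ (overshoot); type (ii), $u_\zeta>0$ and $u_\zeta'<0$ on all of $(0,\infty)$ with $u_\zeta(r)\to 0$ (the desired ground state); and type (iii), $u_\zeta$ stays positive but $u_\zeta'$ vanishes at some interior point $r_0>0$, so the trajectory \emph{turns around} before reaching zero (undershoot). The heart of the argument is to show that the overshoot set $\mathcal O$ and the undershoot set $\mathcal U$ are both open and both non-empty. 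Openness follows from continuous dependence on $\zeta$ together with the transversality $u_\zeta'(R_\zeta)<0$ for $\mathcal O$, and the strictness of the interior maximum for $\mathcal U$. Non-emptiness of $\mathcal O$ for large $\zeta$ is where the subcritical growth hypothesis (5) enters: a scaling and energy comparison argument shows that a sufficiently tall initial profile carries enough potential energy $G(\zeta)$ to be driven down through zero despite the dissipative friction term. Non-emptiness of $\mathcal U$ for $\zeta$ just above $\zeta_0$ uses the equilibrium structure near $\alpha$ (conditions (3)--(4)): since $G(\zeta)$ is small and positive while the nonlinearity is repelling near $\alpha$, the solution loses its downward momentum and turns back before hitting zero.

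Finally, since $(\zeta_0,\infty)$ is connected and $\mathcal O,\mathcal U$ are disjoint, non-empty, open subsets, their union cannot exhaust the interval; any $\zeta^{*}\in(\zeta_0,\infty)\setminus(\mathcal O\cup\mathcal U)$ must then fall into type (ii). It remains to identify the limit $\ell:=\lim_{r\to\infty}u_{\zeta^{*}}(r)$, which exists and is non-negative by monotonicity. The bounded monotone energy forces $u_{\zeta^{*}}'(r)\to 0$ along a sequence, whence $E_{\zeta^{*}}(r)\to G(\ell)$ and $g(\ell)=0$; combining this with the sign information on $G$ and the location of the zeros of $g$ rules out any positive limit and yields $\ell=0$, giving the sought positive, radially decreasing solution vanishing at infinity.

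The step I expect to be the main obstacle is proving that $\mathcal O$ and $\mathcal U$ are both non-empty, and in particular the large-$\zeta$ overshoot, which is exactly where the subcritical exponent restriction in condition (5) is indispensable. Controlling the global behaviour of $u_\zeta$ purely from the ODE, handling continuous dependence at the singular origin $r=0$, and simultaneously excluding the degenerate positive limit in type (ii) together form the delicate technical core of the argument.
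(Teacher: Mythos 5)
The paper does not prove this statement: it is imported verbatim as Theorem I.1 of Berestycki--Lions--Peletier \cite{BeLiPe81}, and the surrounding text only verifies its hypotheses for the specific nonlinearity $g(s)=-s^2+a_2s^3+s^4$. Your outline is the classical shooting argument of that reference (overshoot/undershoot sets, openness, connectedness), so in spirit you are reconstructing the original proof rather than diverging from anything in the paper.

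As a proof, though, two points need repair. First, your trichotomy of shooting data is not exhaustive: a trajectory can be positive and strictly decreasing on all of $(0,\infty)$ yet converge to a limit $\ell>0$. By your own energy argument the only candidates are zeros of $g$ below $\zeta$, i.e.\ $\ell\in\{0,\alpha\}$, and the case $\ell=\alpha$ cannot be dismissed by ``sign information on $G$'': the limiting energy $G(\alpha)<0$ is perfectly compatible with the monotone decrease of $E_\zeta$ from $G(\zeta)>0$. Excluding $\ell=\alpha$ is exactly where hypothesis (3) enters --- a hypothesis your sketch never uses: setting $v=u-\alpha$ one gets $v''+\frac{n-1}{r}v'+mv\le 0$ with $m>0$ near $\alpha$, and Sturm comparison with the oscillatory solutions of the corresponding Bessel-type linear equation forces $v$ to change sign, a contradiction. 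Second, the two non-emptiness claims for $\mathcal{O}$ and $\mathcal{U}$, which you correctly flag as the core, are only asserted; in particular the overshoot must also be produced for $\zeta$ near $\beta$ when $\beta<\infty$ (a case your sketch ignores, since condition (5) is only available when $\beta=\infty$), and the non-crossing for $\zeta$ close to $\zeta_0$ requires a quantitative estimate on the dissipated energy $\int \frac{n-1}{r}(u')^2\,dr$, not just the heuristic that the solution ``loses its downward momentum.'' Since the paper itself defers all of this to \cite{BeLiPe81}, the economical course is to cite the result as the paper does rather than reprove it.
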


%In case $n=3$, we have the following result.
%\begin{theorem}[\cite{BeLi83},Theorem 4]\label{thm existence of ASW in n=3}
%We assume that $g: \R^{+}\rightarrow \R$ is continuous and satisfies:
%\begin{align*}
%(A1) & \, g(0)=0  \text{ and } \overline{\lim}_{s\rightarrow 0^{+}}\frac{g(s)}{s^5}\leq 0.\\
%(A2) & \text{ There exists } \zeta>0 \text{ such that } G(\zeta)>0.\\
%(A3) & \text{ Let } \zeta_0=\inf\{\zeta>0;\, G(\zeta)>0\}. \text{ If } g(s)>0 \text{ for all } s>\zeta_0, \text{ then }\\
% & \lim_{s\rightarrow\infty}\frac{g(s)}{s^5}=0. 
%\end{align*}
%Then there exists a positive, spherically symmetric, and decreasing (with $r$) solution $u$ of the equation
%\[
%-\Delta u=g(u) \text{ in } \R^3
%\]
%such that $u\in \mathcal{D}^{1,2}(\R^3)$. Furthermore, $u$ is a classical solution i.e $u\in C^2(\R^3)$.
%\end{theorem}
In our case, we have
\begin{align}
g(s)&= -s^2+a_2s^3+s^4, \label{our case function g}\\
G(s)&=\frac{-1}{3}s^3+\frac{a_2}{4}s^4+\frac{1}{5}s^5.\label{our case function G}
\end{align}
It is easy to check that the function $g$ and $G$ satisfy the conditions of Theorem \ref{exstence of SW in high dimensions} when $n=2,3$ with $\alpha=\frac{-a_2+\sqrt{a_2^2+4}}{2}$ (the positive zero of $g$), $\zeta_0=\frac{-a_2+\sqrt{a_2^2+\frac{64}{15}}}{\frac{8}{5}}$ (the positive zero of $G$), $\beta=\infty$ and $4<l<5$ when $n=3$ and $l>4$ when $n=2$. Thus, in high dimensions ($n=2,3$), there exists a decreasing radial positive solution of \eqref{SW algebraic profile}.   

The uniqueness of a radial positive solution is obtained by following result.

%\begin{theorem}[\cite{SeTa20},Theorem 1]\label{thm uniqueness of radial positive solution}
%Let us consider the following elliptic equation: 
%\begin{equation}\label{eq elliptic}
%\Delta u + g(u)=0, \quad \text{ in } \R^n, \, n>2.
%\end{equation}
%Then the equation \eqref{eq elliptic} admits at most one positive radical $C^1$ distribution solution if for some $b>0$,
%\begin{align*}
%(H1) & \ g \text{ is continuous on } (0,\infty), \text{ with } g(u) \leq 0 \text{ on } (0,b] \text{ and } g(u)>0 \text{ for } u>b,\\
%(H2) &  \ g\in C^1(b,\infty), \text{ with } h(u)=\frac{ug'(u)}{g(u)} \text{ non inscreasing on } (b,\infty). 
%\end{align*}
%\end{theorem} 

\begin{theorem}[\cite{PuSe98},Theorem 1]\label{thm uniqueness 2}
Let us consider, for $n \geq 2$, the following equation
\begin{equation}\label{eq1000}
\Delta u+g(u)=0, 
\end{equation}
where $g$ satisfies the following conditions:
\begin{itemize}
\item[(a)] $g$ is continuous on $[0,\infty)$ and $g(0)=0$,
\item[(b)] $g$ is a $C^1$-function on $(0,\infty)$,
\item[(c)] There exists $a>0$ such that $g(a)=0$ and
\begin{align*}
g(u)<0 &\text{ for } 0<u<a,\\
g(u)>0 &\text{ for } u>a.
\end{align*}
\item[(d)] $\frac{d}{du}\left[\frac{G(u)}{g(u)}\right] \geq \frac{n-2}{2n}, \text{ for } u>0, u\neq a
$, where $G(s)=\int_0^s f(\tau)d\tau$.
\end{itemize}
Then \eqref{eq1000} admits at most one radial positive solution.
\end{theorem}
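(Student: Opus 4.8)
Since Theorem~\ref{thm uniqueness 2} is quoted verbatim from \cite{PuSe98}, I will sketch the route I would follow to prove it rather than reproduce their argument. A radial positive solution of \eqref{eq1000} tending to $0$ is viewed as a solution of the singular ODE
\[
u''+\frac{n-1}{r}u'+g(u)=0,\qquad u(0)=\zeta>0,\quad u'(0)=0,
\]
so that uniqueness becomes a statement about the one-parameter shooting family $\{u_\zeta\}$. Standard fixed-point theory near $r=0$ (to absorb the $\frac{n-1}{r}$ singularity) gives local existence and smooth dependence of $u_\zeta$ on $\zeta$. The energy $E(r)=\tfrac12(u'_\zeta)^2+G(u_\zeta)$ satisfies $E'=-\frac{n-1}{r}(u'_\zeta)^2\le 0$, so it is nonincreasing; combined with hypothesis (c) ($g<0$ on $(0,a)$, $g>0$ on $(a,\infty)$) and (b), this forces any ground state to start above $a$, to be strictly decreasing, and to cross the level $a$ exactly once. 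The task is then to prove that the set of shooting heights $\zeta$ producing a solution that stays positive and decays to $0$ is a singleton.

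The second step is to linearize and build a comparison functional. Differentiating the ODE in $\zeta$ produces the variation $\psi_\zeta:=\partial_\zeta u_\zeta$, solving $\psi''+\frac{n-1}{r}\psi'+g'(u_\zeta)\psi=0$ with $\psi(0)=1$, $\psi'(0)=0$, while differentiating in $r$ shows that $u'_\zeta$ solves the closely related equation $w''+\frac{n-1}{r}w'+\bigl(g'(u_\zeta)-\frac{n-1}{r^2}\bigr)w=0$. The plan is to compare $\psi_\zeta$ against a test function manufactured from $u_\zeta$ and $u'_\zeta$ by a Sturm argument applied to the Wronskian $r^{n-1}\bigl(\psi_\zeta\,w'-\psi'_\zeta\,w\bigr)$ of these two equations. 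Hypothesis (d), written as $\frac{d}{du}\bigl[G/g\bigr]-\frac{n-2}{2n}\ge 0$, is exactly the inequality that fixes the sign of the potential difference appearing in that Wronskian, and from it one concludes that $\psi_\zeta$ changes sign at most once on $(0,\infty)$, a nondegeneracy (Morse-index-one) statement for the ground state.

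With this sign information on $\psi_\zeta$ in hand, I would run the monotone separation argument. If two heights $\zeta_1>\zeta_2$ both yielded ground states, then $u_{\zeta_1}>u_{\zeta_2}$ near $r=0$, and tracking the zeros of $u_{\zeta_1}-u_{\zeta_2}$ (controlled through $\psi_\zeta$ by integrating along the family) shows the two profiles can cross at most once; comparing their forced decay rates as $r\to\infty$ then produces a parity contradiction, so the decaying profile is unique. Equivalently, the derived sign of $\psi_\zeta$ makes the map $\zeta\mapsto(\text{terminal behaviour of }u_\zeta)$ strictly monotone, leaving at most one admissible $\zeta$.

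I expect the main obstacle to be the second step, namely turning hypothesis (d) into a pointwise strict sign for the comparison functional uniformly in $r$, while handling the two singular regimes: the origin, where $u'\to0$ and the $\frac{n-1}{r}$ and $\frac{n-1}{r^2}$ terms blow up, and infinity, where the decay rates of the two solutions must be compared. The borderline constant $\frac{n-2}{2n}$ is precisely the critical-exponent threshold that keeps the functional from degenerating, and obtaining strict rather than merely nonstrict monotonicity near the zero $u=a$ of $g$, which (d) explicitly excludes, will require separate care.
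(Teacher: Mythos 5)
The paper does not prove Theorem~\ref{thm uniqueness 2}: it is imported verbatim from \cite{PuSe98} and used as a black box, so there is no internal argument to compare your sketch against. Judged on its own terms, your proposal is a plan rather than a proof, and its central step is a genuine gap. You route everything through the linearized equation $\psi_\zeta=\partial_\zeta u_\zeta$ and assert that hypothesis (d), $\frac{d}{du}\bigl[G(u)/g(u)\bigr]\ge\frac{n-2}{2n}$, ``is exactly'' what fixes the sign of the Wronskian of $\psi_\zeta$ against $u'_\zeta$ and hence yields the at-most-one-sign-change (nondegeneracy) property. That is the entire difficulty of any Coffman/McLeod--Serrin/Kwong-type uniqueness proof, and you do not carry it out; moreover the claim is doubtful as stated, because the known Wronskian/Sturm arguments of that school hinge on pointwise conditions on $ug'(u)/g(u)$ (sub/supercriticality of the \emph{derivative}), not on the monotonicity of $G/g$. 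Condition (d) is instead tailored to a Pohozaev-type energy functional: the actual proof in \cite{PuSe98} compares two putative ground states through the monotone separation theorem and a generalized Pohozaev identity in which the constant $\frac{n-2}{2n}$ appears as the coefficient of the scaling term, and it never introduces $\partial_\zeta u_\zeta$. So the one inequality you are given is being fed into a machine that is not known to accept it.

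Two further points would need separate repair even if the Wronskian step could be salvaged. First, $g$ is only continuous at $0$ (hypothesis (a)) and $C^1$ only on $(0,\infty)$ (hypothesis (b)); for the nonlinearity at hand, $g(u)=-u^2+a_2u^3+u^4$ is fine, but in the generality of the theorem $g'(u_\zeta(r))$ may degenerate or blow up as $u_\zeta(r)\to 0$, i.e.\ as $r\to\infty$, which is precisely where your Wronskian comparison and the ``parity contradiction'' from decay rates must close. Second, in the zero-mass case relevant to this paper ($G'(0)=g(0)=0$ with $g<0$ near $0$), ground states decay only algebraically, so ``comparing forced decay rates'' does not separate two solutions by a simple exponential dichotomy; this asymptotic matching has to be done by hand. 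As written, the proposal identifies the right objects but leaves unproved exactly the steps where hypothesis (d) must do its work.
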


The function $g$ given in \eqref{our case function g} satisfies conditions (a), (b), (c) of Theorem \ref{thm uniqueness 2} for $a$ the positive root of $g$. When $n=2,3$, the condition (d) is satisfied if only if 
\begin{equation}\label{eq star}
\frac{d}{ds}\left[\frac{\frac{1}{5}s^3+\frac{a_2}{4}s^2-\frac{1}{3}s}{s^2+a_2s-1}\right] \geq \frac{n-2}{2n}, \text{ for } s>0,s \neq a.
\end{equation} 
We prove that \eqref{eq star} holds. We only need to show that 
\[
\frac{d}{ds}\left[\frac{\frac{1}{5}s^3+\frac{a_2}{4}s^2-\frac{1}{3}s}{s^2+a_2s-1}\right] \geq \frac{1}{6}, \text{ for } s \neq a.
\]
This is equivalent to
\[
\frac{1}{5}s^4+\frac{2a_2}{5}s^3+\left(\frac{a_2^2}{2}+\frac{2}{5}\right)-a_2s+1 \geq 0,
\]
which is true for all $s >0$, $a_2 \in\R$ by the fact that
\[
\frac{1}{5}s^4+\frac{2a_2}{5}s^3+\left(\frac{a_2^2}{2}+\frac{2}{5}\right)-a_2s+1=\frac{1}{5}(s^2+a_2s)^2+\frac{3}{10}\left(a_2-\frac{5}{3}\right)^2+\frac{2}{5}s^2+\frac{1}{6}>0.
\]
Thus, there exists a unique radial positive solution of \eqref{SW algebraic profile} by Theorem \ref{thm uniqueness 2}. This completes the proof of Proposition \ref{pro2}.

\section{Variational characterization}

\label{sec2}
Let $n=1,2,3$. In this section, we prove Proposition \ref{pro variational characteristic}. By the assumption of Proposition \ref{pro variational characteristic}, we may pick $a_1=-1$ and $a_3=1$. We recall that $S,K,P$ are defined in \eqref{define of S omega}, \eqref{define of K omega} and \eqref{define of P}.

Let $\mathcal{M}$ and $\mathcal{K}$ be defined as \eqref{define of M} and \eqref{define of K omega}. First, as in \cite{FuHa21}, we prove that $\mathcal{M} $ is not empty. We set
\begin{align*}
J(v)&=\frac{1}{4}\norm{\nabla v}^2_{L^2}+\frac{1}{12}\norm{v}^3_{L^3}+\frac{1}{20}\norm{v}^5_{L^5},\\
\end{align*}
which is well defined on $X$. The functional $S$ is rewritten as
\begin{align*}
S(v)&=\frac{1}{2}K(v)-\frac{1}{6}\norm{v}^3_{L^3}+\frac{a_2}{4}\norm{v}^4_{L^4}+\frac{3}{10}\norm{v}^5_{L^5},\\
S(v)&= \frac{1}{4}K(v)+J(v).
\end{align*}
We can rewrite $\mu$ as
\begin{equation}
\label{eq new of mu}
\mu =\inf\{J(v): v\in \mathcal{K}\}.
\end{equation}
\begin{lemma}\label{lm property of mu}
Let $v \in H^1(\R^n)$. If $K(v)<0$ then $\mu<J(v)$. In particular,
\begin{equation}
\label{eq new of mu 2}
\mu=\inf\{J(v): v\in X\setminus\{0\}, K(v)\leq 0\}.
\end{equation}
\end{lemma}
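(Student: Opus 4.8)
The plan is to prove the two claims of Lemma \ref{lm property of mu} in sequence, the first being the main content and the second a relatively formal consequence. I recall the rewriting $S = \frac14 K + J$, so that on the Nehari manifold $\mathcal{K}$ (where $K=0$) we have $S=J$, which is exactly why $\mu$ admits the reformulation \eqref{eq new of mu}. The functional $J$ is manifestly nonnegative and is built from the \emph{good} (coercive) parts of $S$; the strategy is to exploit this positivity together with a rescaling argument.

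For the first claim, suppose $v \in H^1(\R^n)$ satisfies $K(v)<0$. The natural idea is to use the rescaling $v^\lambda(x)=\lambda^{n/2}v(\lambda x)$ from \eqref{define of v^lambda} and show that the curve $\lambda \mapsto K(v^\lambda)$ vanishes at some $\lambda_0 \in (0,1)$, i.e. that we can shrink $v$ onto the Nehari manifold. First I would compute $K(v^\lambda)$ explicitly in terms of the four scaling-homogeneous pieces $\norm{\nabla v}_{L^2}^2$, $\norm{v}_{L^3}^3$, $\norm{v}_{L^4}^4$, $\norm{v}_{L^5}^5$; each scales as a definite power of $\lambda$. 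Since the gradient term dominates as $\lambda \to 0^+$ (the kinetic term carries $\lambda^2$ while the $L^3$ term, with $a_1=-1$ defocusing, is nonnegative), one gets $K(v^\lambda)>0$ for small $\lambda$, whereas $K(v^1)=K(v)<0$; by the intermediate value theorem there is $\lambda_0 \in (0,1)$ with $K(v^{\lambda_0})=0$, hence $v^{\lambda_0}\in\mathcal{K}$ and $\mu \leq J(v^{\lambda_0})$. It then remains to verify the strict inequality $J(v^{\lambda_0}) < J(v)$, which should follow because every term of $J$ is nondecreasing in $\lambda$: indeed $\norm{\nabla v^\lambda}_{L^2}^2 = \lambda^2\norm{\nabla v}_{L^2}^2$, $\norm{v^\lambda}_{L^3}^3=\lambda^{n/2}\norm{v}_{L^3}^3$, $\norm{v^\lambda}_{L^5}^5=\lambda^{3n/2}\norm{v}_{L^5}^5$, all strictly increasing in $\lambda$ (for $v\neq 0$). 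Therefore $J(v^{\lambda_0})<J(v^1)=J(v)$, giving $\mu<J(v)$.

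For the second claim \eqref{eq new of mu 2}, I would argue by two inequalities. The set $\{K\le 0\}\setminus\{0\}$ contains $\mathcal K$, so the infimum of $J$ over the larger set is $\leq \mu$; for the reverse inequality, the first claim shows any $v$ with $K(v)<0$ satisfies $J(v)>\mu$, while any $v\in\mathcal K$ satisfies $J(v)\ge\mu$ by definition, so no point of $\{K\le 0\}\setminus\{0\}$ has $J$-value below $\mu$. Combining the two gives equality.

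The main obstacle I anticipate is making the rescaling step fully rigorous rather than just heuristic: one must check that the sign configuration of the coefficients (in both the DDF and DFF cases, under the normalization $a_1=-1,a_3=1$) really does force $K(v^\lambda)>0$ for small $\lambda$, and that $J$ is strictly monotone so the strict inequality survives. In the DFF case the $a_2$ term is focusing and the sign bookkeeping of $K(v^\lambda)$ near $\lambda=0$ is slightly delicate, but the $\lambda^2$ kinetic term dominates all lower powers of $\lambda$ as $\lambda\to 0^+$, so the conclusion is robust; the care needed is purely in the exponent comparison, which is routine once the four scaling powers are written down.
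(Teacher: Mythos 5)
Your overall architecture is the same as the paper's: scale $v$ down until it hits the Nehari manifold, use strict monotonicity of $J$ along the scaling to get $\mu\leq J$ at the hitting point $<J(v)$, and deduce \eqref{eq new of mu 2} by the two-inequality argument (which the paper leaves implicit after ``In particular''). The one real difference is the choice of scaling: the paper uses plain scalar multiplication $\lambda\mapsto\lambda v$, for which $K(\lambda v)=\lambda^2\norm{\nabla v}_{L^2}^2+\lambda^3\norm{v}_{L^3}^3-a_2\lambda^4\norm{v}_{L^4}^4-\lambda^5\norm{v}_{L^5}^5$, so the kinetic term carries the \emph{smallest} exponent, genuinely dominates as $\lambda\downarrow 0$, and $K(\lambda v)>0$ for small $\lambda$ is immediate. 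You instead use the $L^2$-invariant rescaling $v^\lambda$, which can be made to work, but your justification of the key positivity step is wrong as written.

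Concretely: as $\lambda\to 0^+$ it is the \emph{lowest} power of $\lambda$ that dominates, not $\lambda^2$. With $v^\lambda$ the exponents are $2$, $n/2$, $n$, $3n/2$ for the kinetic, $L^3$, $L^4$, $L^5$ terms respectively. For $n=1$ the negative $L^5$ term carries $\lambda^{3/2}$, which beats the kinetic $\lambda^2$ as $\lambda\to 0^+$; for $n=2$ the $a_2$-term carries exactly $\lambda^2$ and in the DFF case could outweigh the kinetic term. So your closing claim that ``the $\lambda^2$ kinetic term dominates all lower powers of $\lambda$ as $\lambda\to 0^+$'' is backwards, and the plan you state for making the step rigorous would fail for $n=1,2$. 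The step itself is nevertheless true, for a different reason: the $L^3$ term $+\lambda^{n/2}\norm{v}_{L^3}^3$ (strictly positive since $a_1=-1$ and $v\neq 0$) carries the strictly smallest exponent, $n/2<\min(2,n,3n/2)$ for $n\leq 3$, hence it dominates and forces $K(v^\lambda)>0$ near $\lambda=0$. With that one correction your argument closes (the monotonicity of $J(v^\lambda)$ and the second claim are fine); but the paper's choice $\lambda v$ is cleaner precisely because it makes your ``kinetic term dominates'' intuition actually correct.
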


\begin{proof}
Since $K(v)<0$ and $K(\lambda v) > 0$ if $\lambda>0$ small enough, there exists $\lambda_1\in (0,1)$ such that $K(\lambda_1 v)=0$. Therefore, by \eqref{eq new of mu} and since the function $\lambda \mapsto J(\lambda v)$ on $(0,\infty)$ is increasing, we have
\[
\mu \leq J(\lambda_1 v) < J(v).
\]
This completes the proof. 
\end{proof}

\begin{lemma}\label{mu positive}
The following is true:
\[
\mu>0.
\]
\end{lemma}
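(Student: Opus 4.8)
The goal is to prove that $\mu > 0$, where $\mu = \inf\{J(v): v \in \mathcal{K}\}$ and $\mathcal{K} = \{v \in X \setminus \{0\}: K(v) = 0\}$. Since $J(v) \geq 0$ for every $v$ (it is a sum of three nonnegative terms), the issue is strictly to rule out $\mu = 0$, i.e. to show that $J$ cannot be driven to zero along the Nehari manifold.

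The plan is to argue by contradiction. Suppose there is a minimizing sequence $(v_m) \subset \mathcal{K}$ with $J(v_m) \to 0$. Since
\[
J(v)=\frac{1}{4}\norm{\nabla v}^2_{L^2}+\frac{1}{12}\norm{v}^3_{L^3}+\frac{1}{20}\norm{v}^5_{L^5},
\]
$J(v_m)\to 0$ forces each of $\norm{\nabla v_m}_{L^2}$, $\norm{v_m}_{L^3}$ and $\norm{v_m}_{L^5}$ to tend to $0$. The key is to exploit the constraint $K(v_m)=0$, which reads
\[
\norm{\nabla v_m}^2_{L^2}=a_1\norm{v_m}^{3}_{L^3}+a_2\norm{v_m}^{4}_{L^4}+a_3\norm{v_m}^{5}_{L^5}.
\]
Here the obstacle is that $J$ controls the $L^3$ and $L^5$ norms but not the $L^4$ norm appearing in $K$. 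First I would absorb the $L^4$ term using the interpolation inequality $\norm{v}_{L^4}^4 \leq \norm{v}_{L^3}^{3/2}\norm{v}_{L^5}^{5/2}$ (from $\tfrac14=\tfrac{3/8}{3}+\tfrac{5/8}{5}$ applied via H\"older), so that the right-hand side of $K(v_m)=0$ is bounded by a sum of terms of the form $\norm{v_m}_{L^3}^3$, $\norm{v_m}_{L^3}^{3/2}\norm{v_m}_{L^5}^{5/2}$, and $\norm{v_m}_{L^5}^5$, all of which $\to 0$.

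The remaining step is to derive a lower bound on $\norm{\nabla v_m}^2_{L^2}$ that contradicts its vanishing. I would use the Sobolev/Gagliardo--Nirenberg inequalities to bound $\norm{v_m}_{L^3}$ and $\norm{v_m}_{L^5}$ from above by a positive power of $\norm{\nabla v_m}_{L^2}$ (times a possibly mixed factor), so that the identity $K(v_m)=0$ yields
\[
\norm{\nabla v_m}^2_{L^2}\leq C\left(\norm{\nabla v_m}_{L^2}^{\theta_1}+\norm{\nabla v_m}_{L^2}^{\theta_2}+\norm{\nabla v_m}_{L^2}^{\theta_3}\right)
\]
with all exponents $\theta_i > 2$. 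Dividing by $\norm{\nabla v_m}^2_{L^2}$ (nonzero since $v_m \neq 0$, and here one must check $v_m\neq 0$ forces $\norm{\nabla v_m}_{L^2}\neq 0$) gives $1 \leq C\sum \norm{\nabla v_m}_{L^2}^{\theta_i - 2}$, and as $\norm{\nabla v_m}_{L^2}\to 0$ the right-hand side tends to $0$, a contradiction.

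The main obstacle is handling the $L^4$ term, since $J$ does not directly control it: the interpolation step is what makes the superquadratic structure of the Nehari constraint visible. A secondary subtlety is that in dimensions $n=2,3$ the relevant Gagliardo--Nirenberg exponents differ, so I would track the exponents $\theta_i$ carefully per dimension to confirm $\theta_i > 2$ in each case; for $n=1,2,3$ the cubic and quintic terms are all energy-subcritical, so these inequalities hold with the required superquadratic powers, and the argument closes uniformly.
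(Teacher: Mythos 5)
Your framing (rewriting $\mu$ as the infimum of $J$ over the Nehari manifold, arguing by contradiction along a sequence with $J(v_m)\to 0$, and handling the $L^4$ term via $\norm{v}_{L^4}^4\leq\norm{v}_{L^3}^{3/2}\norm{v}_{L^5}^{5/2}$) is reasonable, but the closing step fails. You propose to bound $\norm{v_m}_{L^3}$ and $\norm{v_m}_{L^5}$ from above by positive powers of $\norm{\nabla v_m}_{L^2}$ so as to reduce the constraint $K(v_m)=0$ to $1\leq C\sum_i\norm{\nabla v_m}_{L^2}^{\theta_i-2}$. No such bound exists for the $L^3$ norm: for $n\leq 3$ the exponent $3$ lies strictly below the critical Sobolev exponent of $\dot H^1(\R^n)$, and testing a putative inequality $\norm{v}_{L^3}\leq C\norm{\nabla v}_{L^2}^{\theta}$ against the two-parameter family $t\,v(\cdot/\lambda)$ gives incompatible conditions on $\theta$ (amplitude scaling forces $\theta=1$, space scaling forces $\theta(n-2)=2n$). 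If instead you keep a genuinely ``mixed factor'' $\norm{v_m}_{L^3}^{\alpha}$ on the right, you end up with an inequality involving the two small quantities $\norm{\nabla v_m}_{L^2}$ and $\norm{v_m}_{L^3}$ whose ratio you do not control, and dividing by $\norm{\nabla v_m}_{L^2}^2$ no longer produces a contradiction. Note also that you never use $a_1=-1$; this sign is what places $+\norm{v}_{L^3}^3$ on the favorable side of the Nehari identity and is essential to any proof.

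The paper's proof repairs exactly this point by treating $\norm{\nabla v}_{L^2}$ and $\norm{v}_{L^3}$ as two independent base quantities: from $0=K(v)\geq\norm{\nabla v}_{L^2}^2+\norm{v}_{L^3}^3-|a_2|\norm{v}_{L^4}^4-\norm{v}_{L^5}^5$ it bounds the $L^4$ and $L^5$ terms by Gagliardo--Nirenberg plus Young as $C\bigl(\norm{\nabla v}_{L^2}^{p}+\norm{v}_{L^3}^{p}\bigr)$ with $p$ strictly larger than the corresponding left-hand exponents, and concludes that at least one of $\norm{\nabla v}_{L^2}$, $\norm{v}_{L^3}$ is bounded below by a positive constant. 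Since $J$ contains both $\tfrac14\norm{\nabla v}_{L^2}^2$ and $\tfrac1{12}\norm{v}_{L^3}^3$, this yields $\mu>0$ directly, with no contradiction argument needed. Your interpolation for the $L^4$ term could substitute for the paper's Gagliardo--Nirenberg estimate of that term, but the dichotomy on the two base norms is the missing idea you would need to close the argument.
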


\begin{proof}
Let $v \in \mathcal{K}$. By using the Gagliardo-Nirenberg inequalities, for some $\theta \in (0,5)$ and $\tilde{\theta}\in (0,4)$, we have 
\begin{align*}
\norm{v}^{5}_{L^5} & \lesssim \norm{\nabla v}_{L^2}^{\theta}\norm{v}_{L^3}^{5-\theta} \leq C_1\norm{\nabla v}_{L^2}^{5}+C_2\norm{v}^5_{L^3},\\
\norm{v}^4_{L^4} &\lesssim \norm{\nabla v}_{L^2}^{\tilde{\theta}}\norm{v}_{L^3}^{4-\tilde{\theta}} \leq C_3 \norm{\nabla v}_{L^2}^4+C_4\norm{v}^4_{L^3},
\end{align*}
we have
\[
0=K(v)\geq (1-C_1\norm{\nabla v}^{3}_{L^2}-|a_2|C_3\norm{\nabla v}^2_{L^2})\norm{\nabla v}^2_{L^2}+(1-C_2\norm{v}_{L^3}^2-|a_2|C_4\norm{v}_{L^3})\norm{v}^3_{L^3},
\]
It follows that $1 \leq C_1\norm{\nabla v}^3_{L^2}+|a_2|C_3\norm{\nabla v}^2_{L^2}\leq C\norm{\nabla v}^3_{L^2}+\frac{1}{2}$ or $1 \leq C_2\norm{v}^2_{L^3}+|a_2|C_4\norm{v}_{L^3}\leq \tilde{C}\norm{v}^2_{L^3}+\frac{1}{2}$, for some $C,\tilde{C}>0$. Hence, $\norm{\nabla v}_{L^2}$ or $\norm{v}^3_{L^3}$ bounded below by some constant. In two cases, $J(v)$ is bounded below by some constant. Combining with \eqref{eq new of mu} we have the conclusion.
\end{proof}

We need the following results.
\begin{lemma}[\cite{BeFrVi14,Li83}]\label{lm converges weakly}
Let $p \geq 1$. Let $(f_n)$ be a bounded sequence in $\dot{H}^{1}(\R^n)\cap L^{p+1}(\R^n)$. Assume that there exists $q \in (p,2^{*}-1)$ such that $\limsup_{n\rightarrow\infty}\norm{f_n}_{L^{q+1}}>0$. Then there exist $(y_n)\subset \R^n$ and $f\in \dot{H}^{1}(\R^n)\cap L^{p+1}(\R^n)\setminus \{0\}$ such that $(f_n(\cdot-y_n))$ has a subsequence that converges to $f$ weakly in $\dot{H}^{1}(\R^n)\cap L^{p+1}(\R^n)$. 
\end{lemma}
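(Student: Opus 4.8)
The plan is to run Lions' concentration--compactness dichotomy in its simplest \emph{non-vanishing} form: the hypothesis $\limsup_{n}\norm{f_n}_{L^{q+1}}>0$ will be shown to force a genuine concentration of the local $L^{p+1}$--mass, and translating the sequence so as to recenter that mass at the origin produces a nonzero weak limit. Concretely, set
\[
\delta:=\limsup_{n\to\infty}\ \sup_{y\in\R^n}\int_{B(y,1)}|f_n|^{p+1}\,dx ,
\]
so that the heart of the matter is the implication $\delta=0 \Rightarrow \norm{f_n}_{L^{q+1}}\to 0$; this contradicts the hypothesis and therefore forces $\delta>0$.

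To prove that implication (a \emph{vanishing lemma}), I would cover $\R^n$ by a lattice of unit cubes $(Q_j)_j$ with uniformly bounded overlap and estimate $\norm{f_n}_{L^{q+1}(Q_j)}$ on each cube. Since $p<q<2^{*}-1$, one has $p+1<q+1<2^{*}$, so there is room to interpolate $L^{q+1}$ between the low exponent $L^{p+1}$ and a high exponent controlled by the gradient; a local Gagliardo--Nirenberg--Sobolev inequality on a unit cube gives
\[
\norm{f_n}_{L^{q+1}(Q_j)}^{q+1}\leq C\,\norm{f_n}_{L^{p+1}(Q_j)}^{\alpha}\left(\norm{\nabla f_n}_{L^2(Q_j)}+\norm{f_n}_{L^{p+1}(Q_j)}\right)^{\beta},
\]
with $\alpha,\beta>0$ and $\alpha+\beta=q+1$. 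Summing over $j$, factoring out $\sup_j\norm{f_n}_{L^{p+1}(Q_j)}^{\alpha}$ (which tends to $0$ in the vanishing case $\delta=0$, since it is bounded by a positive power of $\sup_y\int_{B(y,1)}|f_n|^{p+1}$) and bounding the remaining sum by the uniform $\dot{H}^1\cap L^{p+1}$ bound together with the global Sobolev control $\norm{f_n}_{L^{2^{*}}}\lesssim\norm{\nabla f_n}_{L^2}$ (for $n=3$; any finite exponent suffices for $n=1,2$) yields $\norm{f_n}_{L^{q+1}}\to0$. Choosing the interpolation so that the small local $L^{p+1}$--mass appears with a strictly positive power $\alpha$ while the remaining exponents close against the global bounds is exactly where the strict inequalities $p<q$ and $q<2^{*}-1$ are used, and I expect this exponent bookkeeping to be the only genuinely delicate step; it is classical and may be quoted from \cite{Li83,BeFrVi14}.

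Granting $\delta>0$, I would pass to a subsequence and choose points $y_n\in\R^n$ with $\int_{B(0,1)}|f_n(\cdot-y_n)|^{p+1}\,dx\geq \delta/2$. Writing $g_n:=f_n(\cdot-y_n)$, the sequence $(g_n)$ is bounded in $\dot{H}^1\cap L^{p+1}$, because these norms are translation invariant; as this space is reflexive, a further subsequence satisfies $g_n\rightharpoonup f$ weakly in $\dot{H}^1\cap L^{p+1}$. It remains to check $f\neq0$. On the fixed ball $B(0,2)$ the sequence $(g_n)$ is bounded in $H^1(B(0,2))$ (the gradient is bounded globally, and $\norm{g_n}_{L^2(B(0,2))}\leq C\norm{g_n}_{L^{p+1}(B(0,2))}$ since $p+1\geq2$ on a bounded set), so by Rellich--Kondrachov and $p+1<2^{*}$ a subsequence converges strongly in $L^{p+1}(B(0,1))$, necessarily to $f$. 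Hence
\[
\int_{B(0,1)}|f|^{p+1}\,dx=\lim_{n\to\infty}\int_{B(0,1)}|g_n|^{p+1}\,dx\geq\frac{\delta}{2}>0,
\]
so $f\in\dot{H}^1\cap L^{p+1}\setminus\{0\}$, which completes the proof.
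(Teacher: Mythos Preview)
The paper does not prove this lemma; it is stated with citations to \cite{BeFrVi14,Li83} and used as a black box in the proof of Lemma~\ref{M not empty}. Your proposal is a correct outline of the standard Lions concentration--compactness argument underlying those references: the vanishing lemma (if the supremum of local $L^{p+1}$ mass tends to zero then $\norm{f_n}_{L^{q+1}}\to 0$), the recentering by translations, weak compactness in the reflexive space $\dot H^1\cap L^{p+1}$, and Rellich--Kondrachov on a fixed ball to rule out a zero limit. The one step you flag as delicate---closing the exponents when summing the local Gagliardo--Nirenberg estimates over the cubes---is indeed where the strict inequalities $p<q<2^{*}-1$ enter, and quoting \cite{Li83} for it is appropriate.
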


\begin{lemma}[\cite{BrLi83}]
\label{lm Lieb}
Let $1 \leq r< \infty$. Let $(f_n)$ be a bounded sequence in $L^r(\R^n)$ and $f_n \rightarrow f$ a.e in $\R^n$ as $n\rightarrow\infty$. Then
\[
\norm{f_n}^r_{L^r}-\norm{f_n-f}^r_{L^r}-\norm{f}^r_{L^r} \rightarrow 0,
\] 
as $n\rightarrow\infty$.
\end{lemma}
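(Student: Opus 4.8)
The plan is to prove this by the standard Brezis--Lieb argument, whose engine is an elementary pointwise inequality controlling the failure of $|\cdot|^r$ to be additive, combined with a dominated convergence step and the uniform $L^r$ bound on the sequence.

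First I would record the pointwise estimate: for every $\eps>0$ there is a constant $C_\eps>0$ such that
\[
\bigl| |a+b|^r - |a|^r \bigr| \leq \eps|a|^r + C_\eps|b|^r, \qquad \text{for all } a,b\in\C.
\]
This follows from the homogeneity and local Lipschitz behaviour of $t\mapsto |t|^r$: by rescaling one reduces to $|b|=1$, and then separates the regime where $|a|$ is large (where a mean value estimate bounds the left side by $\eps|a|^r$) from the regime where $|a|$ is bounded (where the $C_\eps|b|^r$ term dominates). Applying it with $a=f_n-f$ and $b=f$, and adding $|f|^r$ to both sides, yields
\[
\Bigl| |f_n|^r - |f_n-f|^r - |f|^r \Bigr| \leq \eps|f_n-f|^r + (C_\eps+1)|f|^r.
\]

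Next I would set $g_n=f_n-f$ and introduce the truncated defect
\[
W_n^{\eps}:= \Bigl( \bigl| |f_n|^r - |g_n|^r - |f|^r \bigr| - \eps|g_n|^r \Bigr)^{+}.
\]
By the previous display we have $0\leq W_n^{\eps}\leq (C_\eps+1)|f|^r$, and $f\in L^r(\R^n)$ by Fatou's lemma since $(f_n)$ is bounded in $L^r$. As $g_n\to 0$ a.e., both $|f_n|^r-|g_n|^r-|f|^r\to 0$ and $\eps|g_n|^r\to 0$ a.e., so $W_n^{\eps}\to 0$ a.e. The dominated convergence theorem then gives, for each fixed $\eps$,
\[
\int_{\R^n} W_n^{\eps}\,dx \longrightarrow 0 \quad \text{as } n\to\infty.
\]

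Finally, from the definition of $W_n^{\eps}$ one has the pointwise bound $\bigl| |f_n|^r-|g_n|^r-|f|^r\bigr|\leq W_n^{\eps}+\eps|g_n|^r$. Integrating and using that $\norm{g_n}^r_{L^r}$ is bounded by some constant $M$ (because $(f_n)$ and $f$ lie in $L^r$) gives
\[
\limsup_{n\to\infty}\int_{\R^n}\bigl| |f_n|^r-|g_n|^r-|f|^r\bigr|\,dx \leq \eps M.
\]
Letting $\eps\downarrow 0$ shows this $L^1$-defect tends to $0$; since $\bigl|\norm{f_n}^r_{L^r}-\norm{f_n-f}^r_{L^r}-\norm{f}^r_{L^r}\bigr|$ is dominated by that same integral, the conclusion follows. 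The only genuinely delicate point is the pointwise inequality of the first step; everything afterward is dominated convergence together with the uniform $L^r$ bound on $(g_n)$.
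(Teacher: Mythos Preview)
Your argument is correct and is precisely the classical Brezis--Lieb proof. The paper does not give its own proof of this lemma; it is stated as a citation from \cite{BrLi83}, so there is nothing to compare against beyond noting that you have reproduced the standard argument faithfully.
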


Now, we comeback to prove the set $\mathcal{M}$ is not empty. 
\begin{lemma}\label{M not empty}
If $(v_n)\in X$ is a minimizing sequence for $\mu$, that is,
\[
K(v_n) \rightarrow 0, \quad S(v_n) \rightarrow \mu,
\]
then there exist $(y_n) \subset \R^n$, a subsequence $(v_{n_j})$, and $v_0 \in X\setminus \{0\}$ such that $v_{n_j}(\cdot-y_{n_j}) \rightarrow v_0$ in $X$. In particular, $v_0 \in \mathcal{M}$.
\end{lemma}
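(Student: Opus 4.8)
The plan is to run a concentration-compactness argument on the minimizing sequence: use the profile-extraction Lemma \ref{lm converges weakly} to recover a nontrivial weak limit after translation, then use the Brezis--Lieb splitting (Lemma \ref{lm Lieb}) both to show that this limit already realizes the infimum and to upgrade weak convergence to strong convergence in $X$.

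First I would establish boundedness. Since $S=\frac14 K+J$, the hypotheses $K(v_n)\to 0$ and $S(v_n)\to\mu$ give $J(v_n)=S(v_n)-\frac14 K(v_n)\to\mu$. As every term of $J$ is nonnegative, this bounds $\norm{\nabla v_n}_{L^2}$, $\norm{v_n}_{L^3}$ and $\norm{v_n}_{L^5}$ uniformly, so $(v_n)$ is bounded in $X$. Next comes the non-vanishing step: I claim $\limsup_n\norm{v_n}_{L^4}>0$ or $\limsup_n\norm{v_n}_{L^5}>0$. Indeed, if both vanished, then expanding $K(v_n)$ with $a_1=-1,\ a_3=1$ would force $\norm{\nabla v_n}_{L^2}^2+\norm{v_n}_{L^3}^3\to 0$, hence $J(v_n)\to 0$, contradicting $J(v_n)\to\mu>0$ (Lemma \ref{mu positive}). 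With a nonvanishing intermediate norm available, I apply Lemma \ref{lm converges weakly} with $p=2$ (so $L^{p+1}=L^3$) and $q=3$ or $q=4$ — both admissible since $q<2^{*}-1$ for $n\in\{1,2,3\}$ — to produce $(y_n)\subset\R^n$ and $v_0\in X\setminus\{0\}$ such that, along a subsequence, $\tilde v_n:=v_n(\cdot-y_n)\rightharpoonup v_0$ weakly in $X$ and a.e.\ (after a further subsequence). By translation invariance of $S,K,J$, the sequence $(\tilde v_n)$ is still minimizing.

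I would then identify $v_0$. Setting $w_n:=\tilde v_n-v_0$, weak convergence in $\dot H^1$ kills the gradient cross term while Brezis--Lieb (Lemma \ref{lm Lieb}) splits the $L^3,L^4,L^5$ norms, giving $K(\tilde v_n)=K(w_n)+K(v_0)+o(1)$ and $J(\tilde v_n)=J(w_n)+J(v_0)+o(1)$. If $K(v_0)>0$, then $K(w_n)\to -K(v_0)<0$, so $K(w_n)\leq 0$ for large $n$ and hence $J(w_n)\geq\mu$ by \eqref{eq new of mu 2}; this yields $\mu=\lim J(\tilde v_n)\geq\mu+J(v_0)>\mu$ (using $J(v_0)>0$ since $v_0\neq 0$), a contradiction. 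Therefore $K(v_0)\leq 0$, so \eqref{eq new of mu 2} gives $J(v_0)\geq\mu$, while weak lower semicontinuity gives $J(v_0)\leq\liminf J(\tilde v_n)=\mu$; hence $J(v_0)=\mu$. Finally, a strict inequality $K(v_0)<0$ would, by the scaling argument of Lemma \ref{lm property of mu}, produce $\lambda_1\in(0,1)$ with $\mu\leq J(\lambda_1 v_0)<J(v_0)=\mu$, which is impossible; thus $K(v_0)=0$, so $v_0\in\mathcal{K}$ with $J(v_0)=\mu$, i.e.\ $v_0\in\mathcal{M}$ and $S(v_0)=\frac14 K(v_0)+J(v_0)=\mu$.

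Strong convergence then follows for free: the splitting $J(\tilde v_n)=J(w_n)+J(v_0)+o(1)$ together with $J(\tilde v_n)\to\mu$ and $J(v_0)=\mu$ forces $J(w_n)\to 0$, and since each term of $J$ is nonnegative this gives $\norm{\nabla w_n}_{L^2}\to 0$ and $\norm{w_n}_{L^3}\to 0$, that is $\tilde v_n\to v_0$ in $X$. The step I expect to be the main obstacle is the limit identification: correctly combining the Brezis--Lieb decompositions of $K$ and $J$ with Lemma \ref{lm property of mu} and \eqref{eq new of mu 2} to pin down $K(v_0)=0$ and $J(v_0)=\mu$ \emph{simultaneously}, so that the limit lands exactly on $\mathcal{M}$. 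The boundedness, non-vanishing, and lower-semicontinuity portions are routine.
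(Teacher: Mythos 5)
Your proposal is correct and follows essentially the same route as the paper: boundedness from $J(v_n)\to\mu$, non-vanishing of an intermediate norm, profile extraction via Lemma \ref{lm converges weakly}, Brezis--Lieb splitting of $J$ and $K$, and the characterization \eqref{eq new of mu 2} to force $J(v_0)=\mu$, $K(v_0)\leq 0$, and $J(w_n)\to 0$. Your treatment is in fact slightly more careful than the paper's at two points (handling the gradient cross term by weak convergence rather than Brezis--Lieb, and explicitly ruling out $K(v_0)<0$ via Lemma \ref{lm property of mu}), but the argument is the same.
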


\begin{proof}
Since $K(v_n) \rightarrow 0$ and $S(v_n) \rightarrow \mu$, we have 
\begin{align}
J(v_n) &\rightarrow \mu,  \label{converge to mu}\\
\frac{-1}{6}\norm{v}^3_{L^3}+\frac{a_2}{4}\norm{v}^4_{L^4}+\frac{3}{10}\norm{v}^5_{L^5} &\rightarrow \mu. \label{convergen to mu 2}
\end{align}
From \eqref{converge to mu}, we infer that $(v_n) $ is bounded in $X$. Also, since $\mu>0$ by Lemma \ref{mu positive} and the Gagliardo-Nirenberg inequality $\norm{v}^5_{L^5} \lesssim \norm{\nabla v}^5_{L^2} +\norm{v}^5_{L^4}$, we have $\limsup_{n\rightarrow\infty}\norm{v_n}_{L^4} >0$. Then, by Lemma \ref{lm converges weakly} there exist $(y_n) \subset \R^n$ and $v_0\in X\setminus\{0\}$ and a subsequence of $(v_n(\cdot-y_n))$, which we still denote by the same notation, such that $v_n(\cdot-y_n) \rightharpoonup v_0$ weakly in $X$. we put $w_n:=v_n(\cdot-y_n)$.
   
We can assume that $w_n \rightarrow v_0$ a.e in $\R^n$ and we prove that $w_n \rightarrow v_0$ strongly in $X$. By Lemma \ref{lm Lieb}, we have
\begin{align}
J(w_n)-J(w_n-v_0)\rightarrow J(v_0), \label{converge of J}\\
K(w_n)-K(w_n-v_0)\rightarrow K(v_0). \label{converge of K}
\end{align}
Since $J(v_0)>0$ by $v_0 \neq 0$, it follows from \eqref{converge of J} and \eqref{converge to mu} that 
\[
\lim_{n\rightarrow\infty}J(w_n-v_0)=\lim_{n\rightarrow\infty}J(w_n)-J(v_0)<\lim_{n\rightarrow\infty}J(w_n)=\mu.
\]
From this and \eqref{eq new of mu 2} we have $K(w_n-v_0)>0$ for $n$ large. Thus, since $K(v_n) \rightarrow 0$ and \eqref{converge of K} we obtain $K(v_n)\leq 0$. By \eqref{eq new of mu 2} and weak lower semicontinuity of the norms, we have
\[
\mu \leq J(v_0) \leq \lim_{n\rightarrow\infty}J(w_n)=\mu.
\]
Combining with \eqref{converge of J} imply that $J(w_n-v_0) \rightarrow 0$ thus, $w_n \rightarrow v_0$ strongly in $X$. This completes the proof.
\end{proof}

\begin{proof}[Proof of Proposition \ref{pro variational characteristic}]
Firstly, we prove the variational characterization of $\phi$ as follows
 \[
 S(\phi)=\mu.
 \]
This means that $\phi$ is a minimizer of \eqref{define of mu}. From Lemma \ref{M not empty}, we have $\mathcal{M} \neq \emptyset$. Let $\varphi\in \mathcal{M}$. We divide the proof of this to three steps.\\
Step 1. There exists $\theta \in \R$ such that $e^{i\theta}\varphi$ is a positive function.\\
We use similar arguments as in \cite[Lemma 2.10]{FuHa21}. Put $v:=|\Re\varphi|$, $w:=|\Im\varphi|$ and $\psi:=v+iw$. By a phase modulation, we may assume that $v\neq 0$. 

Since $|\psi|=|\varphi|$ and $|\nabla \psi|=|\nabla\varphi|$, we have $K(\psi)=K(\varphi)$ and $S(\psi)=S(\varphi)$. Thus, $\psi \in \mathcal{M}$. Then, there exists $\gamma \in\R$ such that 
\[
S'(\psi)=\gamma K'(\psi).
\]  
Hence, 
\begin{equation}\label{eq not important}
\gamma \left<K'(\psi),\psi\right>=\left<S'(\psi),\psi\right>=K(\psi)=0.
\end{equation}
Moreover, using $K(\psi)=0$ we have
\begin{align*}
\left<K'(\psi),\psi\right>&=\partial_{\lambda}K(\lambda\psi)\vert_{\lambda=1}\\
&=\partial_{\lambda}K(\lambda\psi)\vert_{\lambda=1}-4K(\psi)\\
&=(2\norm{\nabla \psi}^2_{L^2}+3\norm{\psi}^3_{L^3}-4a_2\norm{\psi}^4_{L^4}-5\norm{\psi}^5_{L^5})-4(\norm{\nabla \psi}^2_{L^2}+\norm{\psi}^3_{L^3}-a_2\norm{\psi}^4_{L^4}-\norm{\psi}^5_{L^5})\\
&=-2\norm{\nabla \psi}^2_{L^2}-\norm{\psi}^3_{L^3}-\norm{\psi}^5_{L^5}<0.
\end{align*}
Combining with \eqref{eq not important}, we deduce $\gamma=0$. Thus, $S'(\psi)=0$. Hence, $v$ solves the following equation
\[
(-\Delta + |\varphi|-a_2|\varphi|^2-|\varphi|^3)v=0.
\]
Since $v$ is nonnegative and not identically equal to zero, using \cite[Theorem 9.10]{LiLo01}, we infer that $v$ is positive function. Furthermore, since $K(|\psi|) \leq K(\psi)$ and $S(|\psi|) \leq S(\psi)$, it follows from Lemma \ref{lm property of mu} we have $K(|\psi|)=K(\psi)$ and $S(|\psi|)=S(\psi)$. Then, $\norm{\nabla|\psi|}_{L^2}=\norm{\nabla \psi}_{L^2}$. By \cite[Theorem 7.8]{LiLo01}, there exists a constant $c$ such that $w=cv$ for some $c \geq 0$. 

Since $v$ is continuous and positive, $\Re\varphi$ and $\Im\varphi$ do not change sign. Then, there exist constants $\lambda=\pm 1$ and $\eta \in\R$ such that $\Re\varphi=\lambda v$ and $\Im\varphi=\eta v$. Taking $\theta\in\R$ such that $e^{-i\theta}=\frac{\lambda+i\eta}{|\lambda+i\eta|}$, we have $e^{i\theta}\varphi=e^{i\theta}(\lambda+i\eta)v=|\lambda+i\eta|v$. This completes the step 1.   \\
Step 2. Radial symmetry of minimizer.\\
Since \cite[Theorem 1]{LiNi93}, there exists $y \in \R^n$ such that $e^{i\theta}\varphi(\cdot-y)$ is a radial and decreasing function. \\
Step 3. Conclusion.\\
Since $\phi$ and $e^{i\theta}\varphi(\cdot-y)$ are positive radial solutions of \eqref{SW algebraic profile}, using Proposition \ref{pro2}, we obtain 
\[
\phi=e^{i\theta}\varphi(\cdot-y),
\]
Thus, $S(\phi)=S(\varphi)=\mu$, $\phi\in\mathcal{M}$ and each element of $\mathcal{M}$ is of form $e^{i\theta}\phi(\cdot-x_0)$ for some $\theta,x_0\in\R$. 

It remains to classify all algebraic standing waves of \eqref{triple power equation}. We only need to prove that $\mathcal{G}=\mathcal{M} \neq \emptyset$, where $\mathcal{G}$ and $\mathcal{M}$ are defined in \eqref{define of G} and \eqref{define of M}, respectively. We use similar arguments as in \cite[Proof of Theorem 2.1]{FuHa21}. We divide the proof of this in two steps.\\
Step 1. $\mathcal{M} \subset \mathcal{G}$.\\
Let $\psi \in \mathcal{M}$. Then, $S'(\psi)=0$. Now, we show that $\psi\in\mathcal{G}$. Let $v\in X\setminus\{0\}$ such that $S'(v)=0$. From $K(v)=\left<S'(v),v\right>=0$ and by definition of $\mathcal{M}$, we have $S(\psi) \leq S(v)$. Thus, $\psi\in\mathcal{G}$ and $\mathcal{M}\subset\mathcal{G}$.\\
Step 2. $\mathcal{G}\subset\mathcal{M}$ and conclusion.\\
Let $\psi \in \mathcal{G}$. Then $K(\psi)=\left<S'(\psi),\psi\right>=0$. As the above, $\phi \in\mathcal{M}$. As in step 1, $\phi\in\mathcal{G}$. Therefore, $S(\psi)=S(\phi)=\mu$, which implies $\psi \in \mathcal{M}$. Thus $\mathcal{G}\subset\mathcal{M}$, which completes the proof of Proposition \ref{pro variational characteristic}.    
\end{proof}

%Theorem \ref{thm existence of algebraic standing waves in case n=2 and n=3} is a consequence of Proposition \ref{pro2} and Proposition \ref{pro variational characteristic}. 

%\begin{proof}[Proof of Theorem \ref{thm existence of algebraic standing waves in case n=2 and n=3}]
%We use similar arguments as in \cite[Proof of Theorem 2.1]{FuHa21}. We only need to prove that $\mathcal{G}=\mathcal{M} \neq \emptyset$, where $\mathcal{G}$ and $\mathcal{M}$ are defined in \eqref{define of G} and \eqref{define of M}, respectively. We divide our proof in two steps.\\
%Step 1. $\mathcal{M} \subset \mathcal{G}$.\\
%Let $\phi \in \mathcal{M}$. By similar arguments in Step $1$ of the proof of Proposition \ref{pro variational characteristic}, we infer that $S'(\phi)=0$. Now, we show that $\phi\in\mathcal{G}$. Let $v\in X\setminus\{0\}$ such that $S'(v)=0$. From $K(v)=\left<S'(v),v\right>=0$ and by definition of $\mathcal{M}$, we have $S(\phi) \leq S(v)$. Thus, $\phi\in\mathcal{G}$ and $\mathcal{M}\subset\mathcal{G}$.\\
%Step 2. $\mathcal{G}\subset\mathcal{M}$ and conclusion.\\
%Let $\psi \in \mathcal{G}$. Then $K(\psi)=\left<S'(\psi),\psi\right>=0$. From the proof of Proposition \ref{pro variational characteristic}, we have $\phi \in\mathcal{M}$, where $\phi$ is given as in Proposition \ref{pro2}. As in step 1, $\phi\in\mathcal{G}$. Therefore, $S(\psi)=S(\phi)=\mu$, which implies $\psi \in \mathcal{M}$. Thus $\mathcal{G}\subset\mathcal{M}$, which completes the proof of Theorem \ref{thm existence of algebraic standing waves in case n=2 and n=3}.    
%\end{proof}

It turns out that the algebraic standing waves of \eqref{triple power equation} in high dimensions ($n=2,3$) belongs to $H^1(\R^n)$. To prove this, we need the following lemma (see \cite[Lemma 3.4]{FuHa21}).
\begin{lemma}\label{lm 3.4}
Let $\varphi \in C^1([0,\infty))$ be a positive function. If there exist $\rho,A>0$such that
\[
\varphi'(r)+A\varphi(r)^{1+\rho} \leq 0, \text{ for all } r>0,
\]
then 
\[
\varphi(r) \leq \left(\frac{1}{\rho A r}\right)^{\frac{1}{\rho}}.
\]
\end{lemma}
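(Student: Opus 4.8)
The plan is to treat this as a one–dimensional differential inequality and integrate it after dividing by an appropriate power of $\varphi$. Since $\varphi$ is strictly positive on $[0,\infty)$, the quantity $\varphi(r)^{1+\rho}$ never vanishes, so I may divide the hypothesis
\[
\varphi'(r) \leq -A\,\varphi(r)^{1+\rho}
\]
by $\varphi(r)^{1+\rho}>0$ to obtain
\[
\frac{\varphi'(r)}{\varphi(r)^{1+\rho}} \leq -A, \quad \text{for all } r>0.
\]

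The key observation is that the left-hand side is an exact derivative. Indeed, setting $\psi(r):=\varphi(r)^{-\rho}$, the chain rule gives $\psi'(r)=-\rho\,\varphi(r)^{-\rho-1}\varphi'(r)$, so that
\[
\frac{\varphi'(r)}{\varphi(r)^{1+\rho}} = -\frac{1}{\rho}\,\psi'(r).
\]
The differential inequality therefore reads $\psi'(r)\geq \rho A$ for all $r>0$. Integrating this from $0$ to $r$ yields
\[
\psi(r)-\psi(0) \geq \rho A\,r.
\]

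To conclude, I would use positivity of $\varphi$ once more: since $\psi(0)=\varphi(0)^{-\rho}>0$, we may drop the initial term and obtain $\varphi(r)^{-\rho}=\psi(r)\geq \rho A\,r$. Inverting this inequality (the map $t\mapsto t^{-1/\rho}$ is decreasing on $(0,\infty)$) gives
\[
\varphi(r) \leq \left(\frac{1}{\rho A r}\right)^{\frac{1}{\rho}},
\]
which is the desired bound. I do not expect any genuine obstacle here: the argument is a routine integration of a separable differential inequality, and the only points requiring a word of justification are that the positivity of $\varphi$ licenses the division by $\varphi^{1+\rho}$ and that the same positivity at $r=0$ lets us discard the nonnegative term $\psi(0)$. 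The $C^1$ regularity of $\varphi$ is exactly what is needed to make the integration of $\psi'$ legitimate.
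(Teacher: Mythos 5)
Your proof is correct. Note that the paper itself does not prove this lemma --- it is quoted from \cite[Lemma 3.4]{FuHa21} without proof --- and your argument (substituting $\psi=\varphi^{-\rho}$, integrating the resulting inequality $\psi'\geq\rho A$, and discarding the positive term $\psi(0)$) is the standard one for such a separable differential inequality; the only cosmetic point is that the hypothesis holds for $r>0$ rather than at $r=0$, so strictly one integrates over $(\varepsilon,r)$ and lets $\varepsilon\downarrow 0$ using continuity of $\psi$ at the origin.
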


\begin{proof}[Proof of Remark \ref{remark nice}(2)]
We use similar arguments as in \cite[Proof of Proposition 3.5]{FuHa21}. Firstly, we denote $\phi(r)$ as function of $\phi$ respect to variable $r=|x|$. Since $\phi$ is positive decreasing radial function, we have
\[
\norm{\phi}^3_{L^3} \geq \int_{|x|\leq R}|\phi|^3\,dx \geq |\mathcal{B}(R)||\phi(R)|^3=CR^n|\phi(R)|^{3},
\]
for all $R>0$. Hence, 
\[
\phi(x) \leq |x|^{-\frac{n}{3}}\norm{\phi}_{L^3}, \text{ for all } x\in\R.
\]
For $r>r_0$ large enough, we have
\[
|a_2|\phi^3+\phi^4 \leq \frac{1}{2}\phi^2,
\]
Since $\phi$ solves \eqref{SW algebraic profile} and is decreasing as a function of $r$, this implies
\[
\phi''(r) \geq \phi''(r)+\frac{n-1}{r}\phi'(r) = \phi^2-a_2\phi^3-\phi^4 \geq \frac{1}{2}\phi^2, \text{ for } r> r_0.
\]
Multiplying the two sides by $\phi'$ and integrating it on $[r,\infty)$, we get
\[
\phi'(r)^2 \geq \frac{1}{3}\phi^3, \text{ for } r\geq r_0.
\]
Since $\phi'<0$ we obtain that
\[
\phi'(r)+\sqrt{\frac{1}{3}}\phi^{\frac{3}{2}} \leq 0, \text{ for } r\geq r_0.
\]
By Lemma \ref{lm 3.4}, we deduce that
\[
\phi(r) \leq Cr^{-2}, \text{ for }r\geq r_0.
\]
Thus, $\phi \in L^2(\R^n)$, for $n=1,2,3$. From the proof of Proposition \ref{pro variational characteristic}, we have $\phi \in \mathcal{M}$. Hence, $|\nabla \phi| \in L^2(\R^n)$ and $\phi \in H^1(\R^n)$. This completes the proof.
\end{proof}

\begin{remark}
For each $\omega\geq 0$, let $\phi_{\omega}$ be a radial positive solution of \eqref{SW profile}. In the cases $\omega>0$, it is well known that $\phi_{\omega}$ exponential decays. In the case $\omega=0$, in special cases, we may find exactly solution of \eqref{SW algebraic profile} and hence shape decay of $\phi_0$. We may check that $(1+x^2)^{-1}$ solves 
\[
\phi''-6\phi^2+8\phi^3=0.
\]
Hence, in this case, $\phi_0 \approx x^{-2}$ when $x$ large. In our case, it is not easy to find exactly solution of \eqref{SW algebraic profile}. Given a lower bounded for $\phi_0$ when $x$ large is an unanswered question. 
\end{remark}

\section{Instability of algebraic standing waves}
\label{sec3}

Let $n=1,2,3$. In this section, we prove Theorem \ref{Theorem 1}. Throughout this section, we consider the case $DDF$ or $DFF$ and $a_2$ small. Then we may pick $a_1=-1$ and $a_3=1$. First, we prove the following result by using similar arguments as in \cite{Oh95} (see also \cite[Proof of Proposition 5.1]{FuHa21}).  
\begin{proposition}\label{pro instable 1}
Assume that
\begin{equation}
\label{assumption of instability one dimension}
\partial_{\lambda}^2 S(\phi^{\lambda})\vert_{\lambda=1}<0, \text{ where } v^{\lambda}(x):=\lambda^{\frac{n}{2}}v(\lambda x).
\end{equation}
Then the algebraic standing wave $\phi$ is unstable. 
\end{proposition}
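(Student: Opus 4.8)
The plan is to produce, arbitrarily close to $\phi$, initial data lying in the invariant set $\mathcal{B}$ of \eqref{define of B omega}, and then to run the flow and contradict orbital stability by a virial argument. First I note that since $S'(\phi)=0$ one has $P(\phi)=\partial_\lambda S(\phi^\lambda)\vert_{\lambda=1}=\langle S'(\phi),\partial_\lambda\phi^\lambda\vert_{\lambda=1}\rangle=0$, so hypothesis \eqref{assumption of instability one dimension} says exactly that $\lambda=1$ is a strict local maximum of $g(\lambda):=S(\phi^\lambda)$. Writing $P(\phi^\lambda)=\lambda g'(\lambda)$ and recalling that the scaling $v\mapsto v^\lambda$ preserves the $L^2$ norm, I get for $\lambda>1$ close to $1$ that $\norm{\phi^\lambda}_{L^2}=\norm{\phi}_{L^2}$, $S(\phi^\lambda)<S(\phi)$, and $P(\phi^\lambda)=\lambda g'(\lambda)<0$. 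By Proposition \ref{pro variational characteristic} we have $S(\phi)=\mu$, hence $\phi^\lambda\in\mathcal{B}$; moreover $\phi^\lambda\to\phi$ in $H^1(\R^n)$ as $\lambda\to1$. These rescalings are the perturbed data.

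Next I would establish two facts about $\mathcal{B}$. \emph{Invariance:} if $u_0\in\mathcal{B}$ then the solution $u(t)$ stays in $\mathcal{B}$ on its maximal interval. Indeed $S$ and the mass are conserved, so $S(u(t))<\mu$ and $u(t)\neq0$ for all $t$; and $P(u(t))$ cannot vanish, since $\inf\{S(v):v\in X\setminus\{0\},\,P(v)=0\}\ge\mu$ (a consequence of the characterization $\mu=S(\phi)=d$ from Proposition \ref{pro variational characteristic}) would otherwise force $S(u(t))\ge\mu$; by continuity $P(u(t))<0$ throughout. \emph{Coercivity:} for $v\in\mathcal{B}$ I analyze $g_v(\lambda):=S(v^\lambda)$. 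Since $P(v)<0$ there is $\lambda_0\in(0,1)$ with $P(v^{\lambda_0})=0$, whence $S(v^{\lambda_0})\ge\mu$; using that $g_v$ is concave on $[\lambda_0,1]$ (this is where the sign assumptions DDF/DFF, and $a_2<\tfrac{32}{15\sqrt{6}}$ when $n=1$, enter through the scaling exponents in \eqref{define of P}) I obtain
\[
P(v)=g_v'(1)\le\frac{g_v(1)-g_v(\lambda_0)}{1-\lambda_0}=\frac{S(v)-S(v^{\lambda_0})}{1-\lambda_0}\le S(v)-\mu<0.
\]
Applied along the trajectory this yields the \emph{uniform} bound $P(u(t))\le S(u_0)-\mu=:-\delta<0$ for all $t$.

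I would then close by contradiction. Suppose $\phi$ is stable; taking $u_0=\phi^\lambda$ with $\lambda$ close to $1$, the solution is global and confined to a small tube around the orbit $\{e^{i\theta}\phi(\cdot-y)\}$, so $\sup_t\norm{u(t)}_{H^1}<\infty$. For $n=1$ the weight $x^2\phi^2$ is integrable (since $\phi$ decays like $|x|^{-2}$), finite variance is propagated, and the virial identity $V''(t)=8P(u(t))$ with $V(t)=\int_{\R}x^2|u(t)|^2\,dx$ gives $V''(t)\le-8\delta$, forcing $V(t)\to-\infty$, which is impossible since $V\ge0$. For $n=2,3$ the algebraic decay $|x|^{-2}$ makes $V\equiv+\infty$, so I replace $V$ by a localized version $V_R(t)=\int_{\R^n}\chi_R|u(t)|^2\,dx$ with $\chi_R(x)=R^2\chi(x/R)$, where $\chi(x)=|x|^2$ near the origin and $\chi$ is bounded with bounded derivatives. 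The localized virial identity reads $V_R''(t)=8P(u(t))+\mathcal{E}_R(t)$ with $\mathcal{E}_R$ supported in $\{|x|\gtrsim R\}$; choosing $R$ large so that $|\mathcal{E}_R(t)|\le4\delta$ gives $V_R''(t)\le-4\delta$ while $0\le V_R(t)\le CR^2\norm{u}_{L^2}^2$ stays bounded, again a contradiction. Hence $\phi$ is unstable.

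The main obstacle is twofold. The coercivity step — proving $\inf\{S:P=0\}\ge\mu$ and the concavity of $g_v$ on $[\lambda_0,1]$ — is precisely where the parameter restrictions are needed and must be checked from the explicit scaling exponents in \eqref{define of P}. The more delicate point, for $n=2,3$, is making the localization error $\mathcal{E}_R(t)$ small \emph{uniformly in $t$}: this requires uniform spatial tightness of $u(t)$ along the confined trajectory, which is subtle because the modulation center $y(t)$ may drift; controlling this is the genuine technical crux, to be handled as in \cite[Proof of Proposition 5.1]{FuHa21}.
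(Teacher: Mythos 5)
Your overall strategy (perturb $\phi$ by the $L^2$-invariant scaling into the set $\mathcal{B}$, then force exit from a neighborhood of the orbit via a virial inequality) is the right one and matches the paper's, but the mechanism you propose for the uniform lower bound on $-P(u(t))$ has two genuine gaps. First, your invariance step rests on the claim $\inf\{S(v): v\in X\setminus\{0\},\ P(v)=0\}\geq\mu$, which you present as a consequence of Proposition \ref{pro variational characteristic}; it is not. That proposition characterizes $\mu$ as the infimum of $S$ over the Nehari manifold $\{K=0\}$, and transferring this to the Pohozaev manifold $\{P=0\}$ would require a separate minimization argument, delicate for a sign-indefinite triple-power nonlinearity and nowhere needed in the paper. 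Second, your coercivity step asserts that $g_v(\lambda)=S(v^\lambda)$ is concave on $[\lambda_0,1]$ for every $v\in\mathcal{B}$. Computing (with $a_1=-1$, $a_3=1$)
\[
\partial_\lambda^2 S(v^\lambda)=\norm{\nabla v}_{L^2}^2+\tfrac{n}{2}\bigl(\tfrac{n}{2}-1\bigr)\tfrac{\lambda^{n/2-2}}{3}\norm{v}_{L^3}^3-n(n-1)\tfrac{a_2\lambda^{n-2}}{4}\norm{v}_{L^4}^4-\tfrac{3n}{2}\bigl(\tfrac{3n}{2}-1\bigr)\tfrac{\lambda^{3n/2-2}}{5}\norm{v}_{L^5}^5,
\]
the term $\norm{\nabla v}_{L^2}^2$ is always positive, so concavity fails for generic $v$ (large gradient, small $L^5$ norm); hypothesis \eqref{assumption of instability one dimension} only gives $\partial_\lambda^2S(v^\lambda)<0$ for $(\lambda,v)$ in a neighborhood of $(1,\phi)$, by continuity. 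The paper exploits exactly this: Lemma \ref{lemma 5.2} produces $\Lambda(v)$ with $K(v^{\Lambda(v)})=0$ via the implicit function theorem (using $\partial_\lambda K(\phi^\lambda)\vert_{\lambda=1}<0$) and the estimate $\mu\leq S(v^{\Lambda(v)})\leq S(v)+(\Lambda(v)-1)P(v)$ \emph{only for $v$ in the tube} $\mathcal{N}_{\varepsilon_1}$, which yields $-P(u(t))\geq(\mu-S(u_0))/\delta_1$ for as long as $u(t)$ remains in the tube; the virial then forces exit in finite time, which is instability. Neither invariance of $\mathcal{B}$ nor global concavity is needed.

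A further divergence: for $n=2,3$ you keep the data $\phi^\lambda$ (which has infinite variance, since $\phi\sim|x|^{-2}$) and propose a localized virial whose error term you acknowledge you cannot control uniformly in time. The paper avoids this entirely by cutting off the initial data, taking $u_0=\chi_{R(\lambda)}\phi^\lambda\in\mathcal{B}\cap\Sigma\cap\mathcal{N}_{\varepsilon_1}$ (Lemma \ref{lm chosing sequence}), so that finite variance propagates and the exact identity $\frac{d^2}{dt^2}\norm{xu(t)}_{L^2}^2=8P(u(t))\leq-8m(u_0)$ applies on the exit interval $I_{\varepsilon_1}(u_0)$. You should adopt the cutoff and the tube-local formulation; as written, the uniform bound on $P(u(t))$ and the $n=2,3$ virial step are not justified.
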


We define a tube around the standing wave by
\[
\mathcal{N}_{\varepsilon}:=\left\{v\in H^1(\R^n): \inf_{(\theta,y)\in \R\times\R^n}\norm{v-e^{i\theta}\phi(\cdot-y)}_{H^1}<\varepsilon\right\}.
\]
\begin{lemma}\label{lemma 5.2}
Assume \eqref{assumption of instability one dimension} holds. Then there exist $\varepsilon_1,\delta_1\in (0,1)$ such that: For any $v\in \mathcal{N}_{\varepsilon_1}$ there exists $\Lambda(v) \in (1-\delta_1,1+\delta_1)$ such that
\[
\mu\leq S(v)+(\Lambda(v)-1)P(v).
\] 
\end{lemma}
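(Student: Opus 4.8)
The plan is to exploit the scaling family $\lambda\mapsto v^{\lambda}$ together with two facts: under the hypothesis \eqref{assumption of instability one dimension} the map $\lambda\mapsto S(v^{\lambda})$ is concave near $(\phi,1)$, and rescaling a nonzero function onto the Nehari manifold $\mathcal{K}$ forces its value of $S$ to be at least $\mu$.

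First I would reduce to the case $\norm{v-\phi}_{H^1}<\varepsilon_1$. Since $S$, $K$ and $P$ are invariant under phase rotations and translations, and since $(e^{i\theta}v(\cdot-y))^{\lambda}$ is again a phase rotation and translation of $v^{\lambda}$, every quantity appearing in the statement is unchanged if we replace $v$ by $e^{-i\theta}v(\cdot+y)$. Choosing $(\theta,y)$ to nearly realize the infimum in the definition of $\mathcal{N}_{\varepsilon_1}$, we may therefore assume $\norm{v-\phi}_{H^1}<\varepsilon_1$. I also record the identities $S(\phi)=\mu$ (Proposition \ref{pro variational characteristic}) and $P(\phi)=\partial_{\lambda}S(\phi^{\lambda})|_{\lambda=1}=\langle S'(\phi),\partial_{\lambda}\phi^{\lambda}|_{\lambda=1}\rangle=0$, together with the hypothesis $\partial_{\lambda}^2 S(\phi^{\lambda})|_{\lambda=1}<0$.

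The analytic core is a concavity estimate. Writing $A_v(\lambda):=S(v^{\lambda})$, a direct computation expresses $\partial_{\lambda}^2 A_v(\lambda)$ as a finite combination of $\norm{\nabla v}_{L^2}^2$ and $\norm{v}_{L^p}^p$ ($p=3,4,5$) with explicit powers of $\lambda$. Since $H^1(\R^n)\hookrightarrow L^p(\R^n)$ for $p\in\{3,4,5\}$ when $n\leq 3$, the map $(v,\lambda)\mapsto\partial_{\lambda}^2 A_v(\lambda)$ is jointly continuous on $H^1\times(0,\infty)$; being strictly negative at $(\phi,1)$ by hypothesis, there exist $\varepsilon_1,\delta_1\in(0,1)$ with $\partial_{\lambda}^2 A_v(\lambda)<0$ whenever $\norm{v-\phi}_{H^1}<\varepsilon_1$ and $|\lambda-1|<\delta_1$. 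Concavity of $A_v$ on $(1-\delta_1,1+\delta_1)$ then yields, for every such $\Lambda$,
\[
S(v^{\Lambda})=A_v(\Lambda)\leq A_v(1)+(\Lambda-1)A_v'(1)=S(v)+(\Lambda-1)P(v),
\]
using $A_v'(1)=P(v)$.

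It remains to pick $\Lambda=\Lambda(v)\in(1-\delta_1,1+\delta_1)$ for which $S(v^{\Lambda(v)})\geq\mu$; combined with the previous display this finishes the proof. For this I would project $v$ onto the Nehari manifold along the scaling, i.e. choose $\Lambda(v)$ with $K(v^{\Lambda(v)})=0$, so that $v^{\Lambda(v)}\in\mathcal{K}\setminus\{0\}$ and hence $S(v^{\Lambda(v)})\geq\mu$ by the definition \eqref{define of mu} of $\mu$. The existence of such a $\Lambda(v)$ close to $1$ is where the main work lies, and I expect this transversality check to be the delicate point. It follows from the intermediate value theorem once one knows that $\lambda\mapsto K(\phi^{\lambda})$ changes sign at $\lambda=1$, i.e. that $\partial_{\lambda}K(\phi^{\lambda})|_{\lambda=1}\neq 0$; then continuity of $v\mapsto K(v^{1\pm\delta})$ preserves the endpoint signs for $v$ near $\phi$, and, shrinking $\varepsilon_1$ if needed, the resulting zero $\Lambda(v)$ lies in $(1-\delta_1,1+\delta_1)$. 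To verify the transversality, I would use $K(\phi)=0$ and $P(\phi)=0$ to eliminate both $\norm{\nabla\phi}_{L^2}^2$ and $\norm{\phi}_{L^4}^4$, after which $\partial_{\lambda}K(\phi^{\lambda})|_{\lambda=1}$ reduces to a combination of $a_1\norm{\phi}_{L^3}^3$ and $a_3\norm{\phi}_{L^5}^5$ only; with $a_1=-1$ and $a_3=1$ both contributions turn out to have the same sign for each $n\in\{1,2,3\}$, so the quantity is strictly negative and in particular nonzero, in both DDF and DFF. Assembling the pieces gives $\mu\leq S(v^{\Lambda(v)})\leq S(v)+(\Lambda(v)-1)P(v)$, as claimed.
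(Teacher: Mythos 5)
Your proposal is correct and follows essentially the same route as the paper: concavity of $\lambda\mapsto S(v^{\lambda})$ near $(\phi,1)$ gives $S(v^{\lambda})\leq S(v)+(\lambda-1)P(v)$, and the transversality $\partial_{\lambda}K(\phi^{\lambda})\vert_{\lambda=1}<0$ lets one project $v$ onto the Nehari manifold along the scaling so that $\mu\leq S(v^{\Lambda(v)})$. The only (harmless) differences are that you invoke the intermediate value theorem instead of the implicit function theorem for the existence of $\Lambda(v)$, and that your elimination of both $\norm{\nabla\phi}_{L^2}^2$ and $\norm{\phi}_{L^4}^4$ via $K(\phi)=0$ and $P(\phi)=0$ yields a strictly negative combination of $\norm{\phi}_{L^3}^3$ and $\norm{\phi}_{L^5}^5$ for each $n\in\{1,2,3\}$ (I checked the coefficients), which avoids the paper's case split on the sign of $a_2$.
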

\begin{proof}
First, we recall that $S$, $K$ and $P$ are defined as in \eqref{define of S omega}, \eqref{define of K omega} and \eqref{define of P}, respectively.

Since $\partial_{\lambda}^2S(\phi^{\lambda})\vert_{\lambda=1}<0$, by the continuity of the function
\[
(\lambda,v) \mapsto \partial_{\lambda}^2S(v^{\lambda}),
\]
there exist $\varepsilon_1,\delta_1\in (0,1)$ such that $\partial_{\lambda}^2S(v^{\lambda})<0$ for any $\lambda \in (1-\delta_1,1+\delta_1)$ and $v \in \mathcal{N}_{\varepsilon_1}$. Moreover, by the definition of $P$ we have 
\begin{equation}
\label{estimate}
S(v^{\lambda}) \leq S(v)+(\lambda-1)P(v),
\end{equation} 
for $\lambda\in (1-\delta_1,1+\delta_1)$ and $v \in \mathcal{N}_{\varepsilon_1}$.\\
Moreover, consider the map:
\[
(\lambda, v) \mapsto K(v^{\lambda})=\lambda^2\norm{\nabla v}^2_{L^2}+\lambda^{\frac{n}{2}}\norm{v}^3_{L^3}-a_2\lambda^{n}\norm{v}^4_{L^4}-\lambda^{\frac{3n}{2}}\norm{v}^5_{L^5}.
\]
Note that $K(\phi)=0$ and 
\begin{align*}
\partial_{\lambda}K(\phi^{\lambda})\vert_{\lambda=1}&=2\norm{\nabla\phi}^2_{L^2}+\frac{n}{2}\norm{\phi}^3_{L^3}-n a_2\norm{\phi}^4_{L^4}-\frac{3n}{2}\norm{\phi}^5_{L^5}.
\end{align*}
Thus, 
\begin{align*}
\partial_{\lambda}K(\phi^{\lambda})\vert_{\lambda=1}&=\partial_{\lambda}K(\phi^{\lambda})\vert_{\lambda=1}-5P(\phi)\\
&=-3\norm{\nabla\phi}^2_{L^2}-\frac{n}{3}\norm{\phi}^3_{L^3}+\frac{n a_2}{4}\norm{\phi}^4_{L^4}.
\end{align*}
Thus, in the case $a_2<0$, we have $\partial_{\lambda}K(\phi^{\lambda})\vert_{\lambda=1}<0$. In the case $a_2 \geq 0$, using $P(\phi)=0$, we have 
\begin{align*}
\frac{n a_2}{4}\norm{\phi}^4_{L^4}&= \norm{\nabla\phi}^2_{L^2}+\frac{n}{6}\norm{\phi}^3_{L^3}-\frac{3n}{10}\norm{\phi}^5_{L^5} \\
&\leq 3\norm{\nabla\phi}^2_{L^2}+\frac{n}{3}\norm{\phi}^3_{L^3},
\end{align*}  
hence we also have $\partial_{\lambda}K(\phi^{\lambda})\vert_{\lambda=1}<0$. In all cases, by the implicit function theorem, taking $\varepsilon_1$ and $\delta_1$ small enough, for any $v \in \mathcal{N}_{\varepsilon_1}$ there exists $\Lambda(v) \in (1-\delta_1,1+\delta_1)$ such that $\Lambda(\phi)=1$ and $K(v^{\Lambda(v)})=0$. Therefore, by definition of $\mu$ as in \eqref{define of mu} we obtain:
\[
\mu\leq S(v^{\Lambda(v)})\leq S(v)+(\Lambda(v)-1)P(v). 
\]
This completes the proof.
\end{proof}

Let $u_0\in \mathcal{N}_{\varepsilon}$ and $u(t)$ be the associated solution of \eqref{triple power equation}. We define the exit time from the tube $\mathcal{N}_{\varepsilon}$ by
\[
T^{\pm}_{\varepsilon}(u_0):= \inf\{t>0: u(\pm t)\notin \mathcal{N}_{\varepsilon}\}.
\]
We set $I_{\varepsilon}(u_0):=(-T_{\varepsilon}^{-}(u_0),T^{+}_{\varepsilon}(u_0))$.

\begin{lemma}\label{Lemma 5.3}
Assume \eqref{assumption of instability one dimension} holds and let $\varepsilon_1$ be given by Lemma \ref{lemma 5.2}. Then for any $u_0 \in \mathcal{B}\cap \mathcal{N}_{\varepsilon_1}$, where $\mathcal{B}$ is defined as in \eqref{define of B omega}, there exists $m=m(u_0)>0$ such that $P(u(t))\leq -m$ for all $t \in I_{\varepsilon_1}(u_0)$.
\end{lemma}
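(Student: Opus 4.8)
The plan is to show that $P(u(t))$ stays uniformly negative throughout the time that $u(t)$ remains in the tube $\mathcal{N}_{\varepsilon_1}$, by combining the variational inequality of Lemma \ref{lemma 5.2} with the two conserved quantities of \eqref{triple power equation}, namely the energy and the mass. The key structural fact is that $S$ is conserved along the flow (up to the standing-wave frequency, which here is $\omega=0$ so $S$ itself is the relevant conserved energy), while the quantity $S(v)-\mu$ controls $P$ through Lemma \ref{lemma 5.2}. First I would fix $u_0 \in \mathcal{B}\cap\mathcal{N}_{\varepsilon_1}$, so that by definition of $\mathcal{B}$ in \eqref{define of B omega} we have $S(u_0)<\mu$ and $P(u_0)<0$. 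Since $S$ is conserved, $S(u(t))=S(u_0)<\mu$ for all $t$ in the interval of existence, and in particular for all $t\in I_{\varepsilon_1}(u_0)$.

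Next I would apply Lemma \ref{lemma 5.2} to $v=u(t)$ for each $t\in I_{\varepsilon_1}(u_0)$, which is legitimate precisely because $u(t)\in\mathcal{N}_{\varepsilon_1}$ on this interval. This yields
\[
\mu \leq S(u(t))+(\Lambda(u(t))-1)P(u(t)),
\]
hence
\[
(\Lambda(u(t))-1)P(u(t)) \geq \mu - S(u(t)) = \mu - S(u_0) > 0.
\]
I would then set $\delta_0 := \mu - S(u_0)>0$, a fixed positive constant depending only on $u_0$. Since $\Lambda(u(t))\in(1-\delta_1,1+\delta_1)$ with $\delta_1<1$, the factor $\Lambda(u(t))-1$ is bounded in absolute value by $\delta_1$, so $|\Lambda(u(t))-1|\,|P(u(t))|\geq\delta_0$ forces $|P(u(t))|\geq \delta_0/\delta_1$. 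The remaining point is to fix the sign of $P(u(t))$: the product $(\Lambda(u(t))-1)P(u(t))$ is strictly positive, and I claim $P(u(t))$ cannot change sign on $I_{\varepsilon_1}(u_0)$. Indeed $t\mapsto P(u(t))$ is continuous, it is bounded away from $0$ by the estimate just obtained, and at $t=0$ we have $P(u_0)<0$; therefore $P(u(t))<0$ throughout, and consequently $\Lambda(u(t))-1<0$ as well. Setting $m:=m(u_0)=\delta_0/\delta_1>0$ gives $P(u(t))\leq -m$ for all $t\in I_{\varepsilon_1}(u_0)$, as desired.

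The main obstacle to watch is the sign argument: the inequality from Lemma \ref{lemma 5.2} only constrains the product $(\Lambda-1)P$, so one must genuinely use continuity together with the initial condition $P(u_0)<0$ to rule out $P$ crossing zero — a crossing would violate the uniform lower bound $|P(u(t))|\geq\delta_0/\delta_1$, which is the clean contradiction that pins down the sign. A secondary point requiring care is that $S$ is indeed the conserved energy for \eqref{triple power equation} in the $\omega=0$ case and that the local well-posedness in $H^1(\R^n)$ justifies evaluating these functionals along $u(t)$; both are standard but should be invoked explicitly so that the constant $m$ depends only on $u_0$ through $S(u_0)$ and not on $t$.
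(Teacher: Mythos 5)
Your proposal is correct and follows essentially the same route as the paper's proof: apply Lemma \ref{lemma 5.2} to $u(t)$, use conservation of $S$ and $S(u_0)<\mu$ to force $(\Lambda(u(t))-1)P(u(t))\geq \mu-S(u_0)>0$, pin down the sign of $P(u(t))$ by continuity from $P(u_0)<0$, and divide by $|\Lambda(u(t))-1|<\delta_1$ to get the uniform bound $m=(\mu-S(u_0))/\delta_1$. Your extra remark that the uniform lower bound on $|P(u(t))|$ precludes a sign change is a slightly more explicit version of the paper's appeal to continuity of the flow, but the argument is the same.
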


\begin{proof}
For $t \in I_{\varepsilon_1}(u_0)$, since $u(t) \in \mathcal{N}_{\varepsilon_1}$, it follows from Lemma \ref{lemma 5.2} that
\[
\mu-S(u_0) =\mu-S(u(t)) \leq -(1-\Lambda(u(t)))P(u(t)).
\] 
In particular, since $\mu>S(u_0)$ by $u_0\in \mathcal{B}$, we have $P(u(t)) \neq 0$. By continuity of the flow and $P(u_0)<0$ we obtain
\[
P(u(t))<0, \quad 1-\Lambda(u(t))>0.
\]
Therefore, we obtain
\[
-P(u(t)) \geq \frac{\mu-S(u_0)}{1-\Lambda(u(t))} \geq \frac{\mu-S(u_0)}{\delta_1}=:m(u_0)>0.
\]
This completes the proof.
\end{proof}

\begin{lemma}\label{lm go out time stable of solution}
Assume \eqref{assumption of instability one dimension} holds. Then $|I_{\varepsilon_1}|<\infty$ for all $u_0 \in \mathcal{B}\cap \mathcal{N}_{\varepsilon_1}\cap \Sigma$, where
\begin{equation}\label{defineSigma}
\Sigma = \left\{ v\in H^1(\R) : xv \in L^2(\R)\right\}.
\end{equation}
\end{lemma}

\begin{proof}
Let $u(t)$ be associated solution of $u_0 \in \mathcal{B}\cap \mathcal{N}_{\varepsilon_1}\cap \Sigma$. By the virial identity and Lemma \ref{Lemma 5.3} we have
\[
\frac{d^2}{dt^2}\norm{xu(t)}^2_{L^2}=8P(u(t))\leq -8m(u_0)
\]
for all $t \in I_{\varepsilon_1}(u_0)$, which implies $|I_{\varepsilon_1}(u_0)|<\infty$. This completes the proof.
\end{proof}

Let $\chi$ be a smooth cut-off function such that
\[
\chi(r):=\left\{\begin{matrix}
1 & \text{ if } 0\leq r\leq 1,\\
0 & \text{ if } r \geq 2.
\end{matrix}
\right. 
\]
and for $R>0$ define $\chi_R(x)=\chi\left(\frac{|x|}{R}\right)$.\\
The following is similar as in \cite[Lemma 4.5]{FuHa21}.
\begin{lemma}\label{lm chosing sequence}
There exists a function $R:(1,\infty) \rightarrow (0,\infty)$ such that $\chi_{R(\lambda)}\phi^{\lambda} \in \mathcal{B}\cap \Sigma \cap \mathcal{N}_{\varepsilon_1}$ for all $\lambda>1$ close to $1$, and that $\chi_{R(\lambda)}\phi^{\lambda} \rightarrow \phi$ in $H^1(\R^n)$ as $\lambda \downarrow 1$.
\end{lemma}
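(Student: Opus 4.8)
The plan is to first observe that the pure rescaling $\phi^{\lambda}$ already lies in $\mathcal{B}$ for $\lambda>1$ near $1$, and then to repair its only defect — the possible lack of finite variance — by the spatial cut-off $\chi_R$, choosing $R=R(\lambda)$ large enough that none of the strict inequalities defining $\mathcal{B}$ are destroyed while $H^1$-convergence to $\phi$ is preserved.

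For the first step I set $f(\lambda):=S(\phi^{\lambda})$. A direct computation of the rescaled norms writes $f$ as an explicit combination of powers of $\lambda$, and, since $(\phi^{\lambda})^{\mu}=\phi^{\lambda\mu}$, one gets $P(\phi^{\lambda})=\lambda f'(\lambda)$. Because $\phi$ solves \eqref{SW algebraic profile} we have $P(\phi)=f'(1)=0$, while hypothesis \eqref{assumption of instability one dimension} is exactly $f''(1)<0$. Taylor expanding at $\lambda=1$ then yields $S(\phi^{\lambda})=\mu+\tfrac12 f''(1)(\lambda-1)^2+o((\lambda-1)^2)<\mu$ and $P(\phi^{\lambda})=\lambda f''(1)(\lambda-1)+o(\lambda-1)<0$ for every $\lambda>1$ sufficiently close to $1$. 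Hence $\phi^{\lambda}\in\mathcal{B}$, and the only property that may fail is $\phi^{\lambda}\in\Sigma$ (in dimensions $n=2,3$ the decay $\phi\lesssim r^{-2}$ from Remark \ref{remark nice}(2) makes $|x|\phi\notin L^2$).

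Next I fix such a $\lambda$ and let $R\to\infty$. Writing $\nabla(\chi_R\phi^{\lambda})=\chi_R\nabla\phi^{\lambda}+(\nabla\chi_R)\phi^{\lambda}$, the first term converges to $\nabla\phi^{\lambda}$ in $L^2$ by dominated convergence, while the commutator obeys $\norm{(\nabla\chi_R)\phi^{\lambda}}_{L^2}\lesssim R^{-1}\norm{\phi^{\lambda}}_{L^2}=R^{-1}\norm{\phi}_{L^2}\to0$, where I use $\phi\in H^1(\R^n)$ (Proposition \ref{pro1} for $n=1$, Remark \ref{remark nice}(2) for $n=2,3$). Together with the analogous, easier $L^3,L^4,L^5$ statements this gives $\chi_R\phi^{\lambda}\to\phi^{\lambda}$ in $H^1$ as $R\to\infty$. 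Since $n\le 3$, the Sobolev embedding makes $S$ and $P$ continuous on $H^1$, so $S(\chi_R\phi^{\lambda})\to S(\phi^{\lambda})<\mu$ and $P(\chi_R\phi^{\lambda})\to P(\phi^{\lambda})<0$; thus there is $R_0(\lambda)$ with $\chi_R\phi^{\lambda}\in\mathcal{B}$ for all $R\ge R_0(\lambda)$, and since $\chi_R\phi^{\lambda}$ has compact support it lies in $\Sigma$ automatically.

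Finally I assemble the two limits. Because $\phi^{\lambda}\to\phi$ in $H^1$ as $\lambda\downarrow1$, I pick for each $\lambda>1$ a radius $R(\lambda)\ge R_0(\lambda)$ with $\norm{\chi_{R(\lambda)}\phi^{\lambda}-\phi^{\lambda}}_{H^1}\le\lambda-1$, which the $R\to\infty$ limit of the previous step permits. Then $\norm{\chi_{R(\lambda)}\phi^{\lambda}-\phi}_{H^1}\le(\lambda-1)+\norm{\phi^{\lambda}-\phi}_{H^1}\to0$, which is the asserted convergence; in particular the norm is below $\varepsilon_1$ for $\lambda$ close to $1$, so $\chi_{R(\lambda)}\phi^{\lambda}\in\mathcal{N}_{\varepsilon_1}$ (take $\theta=0$, $y=0$ in the infimum). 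Combined with the membership in $\mathcal{B}\cap\Sigma$ this completes the construction. The one genuinely delicate point is the $\dot H^1$ control of the cut-off error, i.e. the commutator term, which is exactly where the global $H^1$ integrability of $\phi$ is indispensable; everything else reduces to the continuity of $S,P$ on $H^1$ and a routine diagonal choice of $R(\lambda)$.
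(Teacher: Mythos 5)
Your proof is correct and follows essentially the same route as the paper: Taylor expansion of $\lambda\mapsto S(\phi^{\lambda})$ at $\lambda=1$ to get $\phi^{\lambda}\in\mathcal{B}$, truncation by $\chi_R$ to gain membership in $\Sigma$, and $H^1$-continuity of $S$ and $P$ to preserve the strict inequalities. Your handling of the order of quantifiers — first fixing $R_0(\lambda)$ so that $\chi_R\phi^{\lambda}\in\mathcal{B}$ for $R\geq R_0(\lambda)$, then making the diagonal choice $R(\lambda)\geq R_0(\lambda)$ with cut-off error at most $\lambda-1$ — is actually more explicit than the paper's, which simply takes any $R(\lambda)\to\infty$; the only point you assert without proof is the standard fact $\phi^{\lambda}\to\phi$ in $H^1$ as $\lambda\downarrow 1$, to which the paper devotes its Step 1.
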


\begin{proof}
We divide the proof in three steps.\\
Step 1: Prove $\phi^{\lambda} \rightarrow \phi$ in $H^1(\R^n)$ as $\lambda \downarrow 1$.\\
We have 
\begin{align}
&\norm{\phi^{\lambda}-\phi}_{\dot{H}^1}+\norm{\phi^{\lambda}-\phi}_{L^2}\nonumber\\
&\leq  \norm{\lambda^{\frac{n}{2}}\phi(\lambda \cdot)-\phi(\lambda \cdot)}_{\dot{H}^1}+\norm{\phi(\lambda \cdot)-\phi(\cdot)}_{\dot{H}^1}+\norm{\lambda^{\frac{n}{2}}\phi(\lambda \cdot)-\phi(\lambda \cdot)}_{L^2}+\norm{\phi(\lambda \cdot)-\phi(\cdot)}_{L^2}\nonumber\\
&=(\lambda^{\frac{n}{2}}-1)(\lambda^{1-\frac{n}{2}}\norm{\phi}_{\dot{H^1}}+\lambda^{\frac{-n}{2}}\norm{\phi}_{L^2}) \label{first term}\\
&\quad+ \norm{\phi(\lambda \cdot)-\phi(\cdot)}_{\dot{H}^1}+\norm{\phi(\lambda \cdot)-\phi(\cdot)}_{L^2} \label{second term}.
\end{align}
The term \eqref{first term} converges to zero as $\lambda \rightarrow 1$. To prove the term \eqref{second term} converges to zero as $\lambda \rightarrow 1$, we prove for all $\phi \in L^p$, $1<p<\infty$, then the following holds
\[
\norm{\phi(\lambda x)-\phi(x)}_{L^p} \rightarrow 0, \text{ as } \lambda \rightarrow 1.
\]   
Indeed, we only need to consider $\phi$ is a integrable step function, by density of step function in $L^p(\R^n)$. It is sufficient to consider $\phi= \mathbbm{1}_{\mathcal{A}}$, for some measurable set $\mathcal{A}$. We have $\phi(\lambda x)=\mathbbm{1}_{\frac{1}{\lambda}\mathcal{A}}$ and
\begin{align*}
\norm{\phi(\lambda x)-\phi(x)}^p_{L^p}&=\norm{\mathbbm{1}_{\frac{1}{\lambda}\mathcal{A}}-\mathbbm{1}_{\mathcal{A}}}^p_{L^p}\\
&=\mu(\{\lambda x\in \mathcal{A}, x\not\in\mathcal{A}\}\cup \{x\in \mathcal{A},\lambda x\not\in\mathcal{A}\})\\
&\leq \mu(\mathcal{A})+\mu\left(\frac{1}{\lambda}\mathcal{A}\right)-2\mu\left(\mathcal{A}\cap\frac{1}{\lambda}\mathcal{A}\right),
\end{align*}    
this converges to zero when $\lambda$ converges to $1$. Thus, if we consider $\nabla \phi$ as a vector function then the term \eqref{second term} converges to zero as $\lambda$ converges to $1$. \\
Step 2: $\chi_{R(\lambda)}\phi^{\lambda} \rightarrow \phi$ as $\lambda\rightarrow 1$ for some function $R$.\\
Choosing $R: (1,\infty)\rightarrow (0,\infty)$ such that $R(\lambda) \rightarrow \infty$ as $\lambda \rightarrow 1$. Thus, for all $v \in H^1(\R^n)$, we have
\[
\chi_{R(\lambda)}v \rightarrow v, \text{ as } \lambda \rightarrow 1
\]
and $\chi_{R(\lambda)}\phi^{\lambda}\rightarrow \phi$ in $H^1(\R^n)$ as $\lambda \downarrow 1$, since step 1.  \\
Step 3: Conclusion.\\
We claim that $\phi^{\lambda} \in \mathcal{B}$ for $\lambda>1$ close to $1$. Since $\partial_{\lambda}S(\phi^{\lambda})\vert_{\lambda=1}=0$ and $\partial_{\lambda}^2S(\phi^{\lambda})\vert_{\lambda=1}<0$, there exists $\lambda_1>1$ such that $\partial_{\lambda}S(\phi^{\lambda})<0$ and $S(\phi^{\lambda})<\mu$ for $\lambda\in (1,\lambda_1)$. We see that $P(\phi^{\lambda})=\lambda \partial_{\lambda} S(\phi^{\lambda})<0$ for $\lambda \in (1,\lambda_1)$. Moreover, taking $\lambda_1$ close to $1$, we get $\phi^{\lambda} \in \mathcal{N}_{\varepsilon_1}$ for all $\lambda\in (1,\lambda_1)$. Since $\chi_{R(\lambda)}$ has compact support and $\norm{\chi_{R(\lambda)}\phi^{\lambda}-\phi^{\lambda}}_{H^1} \rightarrow 0$ as $\lambda\rightarrow 1$, we have $\chi_{R(\lambda)}\phi^{\lambda} \in \mathcal{B}\cap \mathcal{N}_{\varepsilon_1}\cap \Sigma$ for $\lambda$  close to $1$. This completes the proof.  
\end{proof}

\begin{proof}[Proof of Proposition \ref{pro instable 1}]
By Lemma \ref{lm chosing sequence}, there exists $R:(1,\infty) \rightarrow (0,\infty)$ such that $\chi_{R(\lambda)}\phi^{\lambda}\rightarrow \phi$ in $H^1(\R^n)$ as $\lambda \downarrow 1$. Moreover, $\chi_{R(\lambda)}\phi^{\lambda} \in \mathcal{B}\cap \Sigma \cap \mathcal{N}_{\varepsilon_1}$ for $\lambda>1$ close to $1$. Thus, by Lemma \ref{lm go out time stable of solution}, $|I_{\varepsilon_1}(\chi_{R(\lambda)}\phi^{\lambda})|<\infty$ for $\lambda>1$ close to $1$ and since $\chi_{R(\lambda)}\phi^{\lambda}  \rightarrow \phi$ as $\lambda \rightarrow 1$ in $H^1(\R^n)$ we have $ \phi$ is unstable. This completes the proof.
\end{proof}

\begin{proof}[Proof of Theorem \ref{Theorem 1}]
Using Proposition \ref{pro instable 1}, we only need to check the condition \eqref{assumption of instability one dimension}. We have 
\[
\partial_{\lambda}^2S(\phi^{\lambda})\vert_{\lambda=1}=\norm{\nabla\phi}^2_{L^2}+\frac{n(n-2)}{12}\norm{\phi}^3_{L^3}-\frac{n(n-1)a_2}{4}\norm{\phi}^4_{L^4}-\frac{3n(3n-2)}{20}\norm{\phi}^5_{L^5}.
\]
We divide into three cases. \\
Case $n=1$:\\
In this case, we have
\[
\partial_{\lambda}^2S(\phi^{\lambda})\vert_{\lambda=1}=\norm{\phi'}^2_{L^2}-\frac{1}{12}\norm{\phi}^3_{L^3}-\frac{3}{20}\norm{\phi}^5_{L^5}.
\]
In the case DDF, using $K(\phi)=0$ and $P(\phi)=0$ we have 
\[
0=P(\phi)-\frac{1}{4}K(\phi)=\frac{3}{4}\norm{\phi'}^2_{L^2}- \frac{1}{12}\norm{\phi}^3_{L^3}-\frac{1}{20}\norm{\phi}^5_{L^5}.
\]
Thus,
\[
\norm{\phi'}^2_{L^2}=\frac{1}{9}\norm{\phi}^3_{L^3}+\frac{1}{15}\norm{\phi}^5_{L^5}.
\]
It follows that
\begin{align}
\partial_{\lambda}^2S(\phi^{\lambda})\vert_{\lambda=1} &=\frac{1}{36}\norm{\phi}^3_{L^3}-\frac{1}{12}\norm{\phi}^5_{L^5}\nonumber\\
&= \frac{1}{36}\norm{\phi}^3_{L^3}-\frac{1}{12}\frac{10}{3}\left(\norm{\phi'}^2_{L^2}+\frac{1}{6}\norm{\phi}^3_{L^3}-\frac{a_2}{4}\norm{\phi}^4_{L^4}-P(\phi)\right)\nonumber\\
&= -\frac{5}{18}\norm{\phi'}^2_{L^2}-\frac{1}{54}\norm{\phi}^3_{L^3}+\frac{5a_2}{72}\norm{\phi}^4_{L^4}. \label{equality new}
\end{align}
Thus, 
\[
\partial_{\lambda}^2S(\phi^{\lambda})\vert_{\lambda=1} <0.
\]  
This implies the instability of algebraic standing waves in the case DDF. \\
In the case DFF, using \eqref{equality new} and the fact that $a \norm{\phi}^3_{L^3}+b\norm{\phi}^5_{L^5}\geq 2\sqrt{ab}\norm{\phi}^4_{L^4}$ for all $a,b>0$ we have 
\begin{align*}
\partial_{\lambda}^2S(\phi^{\lambda})\vert_{\lambda=1}&=-\frac{5}{18}\left(\frac{1}{9}\norm{\phi}^3_{L^3}+\frac{1}{15}\norm{\phi}^5_{L^5}\right)-\frac{1}{54}\norm{\phi}^3_{L^3}+\frac{5a_2}{72}\norm{\phi}^4_{L^4}\\
&= -\frac{4}{81}\norm{\phi}^3_{L^3}-\frac{1}{54}\norm{\phi}^5_{L^5}+\frac{5a_2}{72}\norm{\phi}^4_{L^4}\\
&\leq  -\frac{4}{27 \sqrt{6}} \norm{\phi}^4_{L^4}+\frac{5a_2}{72}\norm{\phi}^4_{L^4}<0,
\end{align*}
since we have assumed $a_2 < \frac{32}{15\sqrt{6}}$. Thus, in the case DFF and $a_2<\frac{32}{15\sqrt{6}}$ we obtain the instability of algebraic standing waves. \\
Case $n=2$:\\
In this case, we have
\begin{equation}\label{eqqq}
\partial^2_{\lambda}S(\phi^{\lambda})\vert_{\lambda=1}=\norm{\nabla\phi}^2_{L^2}-\frac{a_2}{2}\norm{\phi}^4_{L^4}-\frac{6}{5}\norm{\phi}^5_{L^5}.
\end{equation}
Moreover,
\[
0=P(\phi)=\norm{\nabla \phi}^2_{L^2}+\frac{1}{3}\norm{\phi}^3_{L^3}-\frac{a_2}{2}\norm{\phi}^4_{L^4}-\frac{3}{5}\norm{\phi}^5_{L^5}.
\]
Replacing $\frac{a_2}{2}\norm{\phi}^4_{L^4}=\norm{\nabla \phi}^2_{L^2}+\frac{1}{3}\norm{\phi}^3_{L^3}-\frac{3}{5}\norm{\phi}^5_{L^5}$ in \eqref{eqqq}, we obtain
\[
\partial^2_{\lambda}S(\phi^{\lambda})\vert_{\lambda=1}=-\frac{1}{3}\norm{\phi}^3_{L^3}-\frac{3}{5}\norm{\phi}^5_{L^5}<0.
\]
The instability of algebraic standing waves in the case $n=2$ follows.\\
Case $n=3$:\\
In this case, we have
\begin{equation}\label{eqqq2}
\partial^2_{\lambda}S(\phi^{\lambda})\vert_{\lambda=1}=\norm{\nabla\phi}^2_{L^2}+\frac{1}{4}\norm{\phi}^3_{L^3}-\frac{3 a_2}{2}\norm{\phi}^4_{L^4}-\frac{63}{20}\norm{\phi}^5_{L^5}.
\end{equation}
Moreover,
\[
0=P(\phi)=\norm{\nabla \phi}^2_{L^2}+\frac{1}{2}\norm{\phi}^3_{L^3}-\frac{3 a_2}{4}\norm{\phi}^4_{L^4}-\frac{9}{10}\norm{\phi}^5_{L^5}.
\]
Hence, 
\begin{align*}
\partial^2_{\lambda}S(\phi^{\lambda})\vert_{\lambda=1}&=\partial^2_{\lambda}S(\phi^{\lambda})\vert_{\lambda=1}-2P(\phi)\\
&=-\norm{\nabla \phi}^2_{L^2}-\frac{3}{4}\norm{\phi}^3_{L^3}-\frac{27}{20}\norm{\phi}^5_{L^5}<0.
\end{align*}
The instability of algebraic standing waves in case $n=3$ follows. This completes the proof of Theorem \ref{Theorem 1}.
\end{proof}

\begin{remark}
We assume that the assumption of Theorem \ref{Theorem 1} holds. Let $e^{i\omega t}\phi_{\omega}(x)$ be standing wave of \eqref{triple power equation} with $\phi_{\omega}$ is a radial positive solution of \eqref{SW profile}. By similar as Proposition \ref{pro instable 1}, if
\begin{equation}\label{00}
\partial_{\lambda}^2S_{\omega}(\phi_{\omega}^{\lambda})\vert_{\lambda=1}<0
\end{equation}
then the standing wave $e^{i\omega t}\phi_{\omega}(x)$ is orbitally unstable. By continuity of the maps $\omega \mapsto \phi_{\omega}$ and $\omega\mapsto S_{\omega}$ on $\R^+$, we see that for $\omega>0$ small enough, the condition \eqref{00} holds. This implies that the standing waves are orbitally unstable in the case of small frequency. This goes back to the result of \cite{LiTsIa21} in dimension $n=1$ for the case D*F (see Figure 3 \cite{LiTsIa21}).   
\end{remark}
When $n=2,3$, the highest power of \eqref{triple power equation} is $L^2$-supercritical. We use similar argument in \cite{FuHa21} to prove Theorem \ref{theorem 2}. From now on, we assume that the assumptions in Theorem \ref{theorem 2} hold. Let $S_{\omega}$, $K_{\omega}$ be defined as in \eqref{S omega}, \eqref{K omega} respectively. We define
\begin{align*}
P(v)&:=\norm{\nabla v}^2_{L^2}+\frac{n}{6}\norm{v}^3_{L^3}-\frac{na_2}{4}\norm{v}^4_{L^4}-\frac{3n}{10}\norm{v}^5_{L^5},\\
\mu(\omega)&:=\inf\{S_{\omega}(v): v\in H^1(\R^n)\setminus \{0\}, K_{\omega}(v)=0 \},\\
\mathcal{B}_{\omega}&:= \{v\in H^1(\R^n): S_{\omega}(v)<\mu(\omega),P(v)<0\}.
\end{align*}
We can rewrite that
\[
P(v)=\partial_{\lambda}S_{\omega}(v^{\lambda})\vert_{\lambda=1},\quad \text{ where } v^{\lambda}(x):=\lambda^{\frac{n}{2}}v(\lambda x).
\]
Since the assumption of Theorem \ref{theorem 2}, we have $S_{\omega}(\phi_{\omega})=\mu(\omega)$. Let $u_{0} \in \Sigma$, where $\Sigma$ is defined by \eqref{defineSigma} and $u$ be the associated solution of \eqref{triple power equation} with the initial data $u(t=0)=u_0$. We see that $u \in C(I,\Sigma)$, where $0\in I$ is the maximal existence interval of $u$. Moreover, we have the following identity
\begin{equation}
\label{virial identity}
\frac{d^2}{dt^2}\norm{xu(t)}^2_{L^2}=8P(u(t)),
\end{equation} 
for all $t\in I$. We have the following result. 
\begin{lemma}\label{lm new}
If $v\in H^1(\R^n)$, $v \neq 0$ and $P(v)\leq 0$ then
\[
\frac{1}{2}P(v)\leq S_{\omega}(v)-\mu(\omega).
\]
\end{lemma}
\begin{proof}
Define
\begin{align*}
f(\lambda)&:=S_{\omega}(v^{\lambda})-\frac{\lambda^2}{2}P(v)\\
&=\frac{\omega}{2}\norm{v}^2_{L^2}+\frac{\lambda^2}{2}(\norm{\nabla v}^2_{L^2}-P(v))+\frac{1}{3}\lambda^{\frac{n}{2}}\norm{v}^3_{L^3}-\frac{a_2}{4}\lambda^n\norm{v}^4_{L^4}-\frac{1}{5}\lambda^{\frac{3n}{2}}\norm{v}^5_{L^5}.
\end{align*}
From the definition of $P$, we have $f'(1)=0$. Moreover, we have
\begin{align*}
f'(\lambda)&=\lambda(\norm{\nabla v}^2_{L^2}-P(v))+\frac{n}{6}\lambda^{\frac{n}{2}-1}\norm{v}^3_{L^3}-\frac{n a_2}{4}\lambda^{n-1}\norm{v}^4_{L^4}-\frac{3n}{10}\lambda^{\frac{3n}{2}-1}\norm{v}^5_{L^5}\\
&=-\lambda\left(-\frac{n}{6}\norm{v}^3_{L^3}-\frac{n a_2}{4}\norm{v}^4_{L^4}-\frac{3n}{10}\norm{v}^5_{L^5}\right)+\frac{n}{6}\lambda^{\frac{n}{2}-1}\norm{v}^3_{L^3}-\frac{n a_2}{4}\lambda^{n-1}\norm{v}^4_{L^4}-\frac{3n}{10}\lambda^{\frac{3n}{2}-1}\norm{v}^5_{L^5}\\
&=-\lambda \left(\frac{n}{6}\norm{v}^3_{L^3}(1-\lambda^{\frac{n}{2}-2})+\frac{na_2}{4}\norm{v}^4_{L^4}(\lambda^{n-2}-1)+\frac{3n}{10}\norm{v}^5_{L^5}(\lambda^{\frac{3n}{2}-2}-1)\right).
\end{align*}
When $n=2$, $f'(\lambda)=\lambda(1-\lambda)\left(\frac{1}{3\lambda}\norm{v}^3_{L^3}+\frac{3}{5}\norm{v}^5_{L^5}\right)$. Thus, $f'$ is positive if $\lambda<1$ and negative if $\lambda>1$. When $n=3$, 
\[
f'(\lambda)=\lambda(1-\lambda^{\frac{1}{2}})\left(\frac{1}{2}\norm{v}^3_{L^3}\lambda^{\frac{-1}{2}}+\frac{3a_2}{2}\norm{v}^4_{L^4}(\lambda^{\frac{1}{2}}+1)+\frac{9}{10}\norm{v}^5_{L^5}\frac{\lambda^2+\lambda^{\frac{3}{2}}+...+1}{\lambda^{\frac{1}{2}+1}}\right).
\] 
Since $a_2>-\varepsilon$ for $\varepsilon>0$ small enough, we may prove that the sign of $f'$ is similar as in the case $n=2$ (we use Cauchy inequality and the fact that $\norm{v}^8_{L^4}\leq \norm{v}^3_{L^3}\norm{v}^5_{L^5}$ and $(\lambda^{\frac{1}{2}}+1)^4\lesssim\lambda^2+\lambda^{\frac{3}{2}}+...+1$). Thus, we have
\[
f(1)=\max\{f(\lambda): \lambda>0\}.
\]
Since $n \geq 2$, we have $K_{\omega}(v^{\lambda})>0$ if $\lambda=0$ and $K_{\omega}(v^{\lambda})<0$ if $\lambda$ large enough. Thus, there exists $\lambda_0>0$ such that $K_{\omega}(v^{\lambda_0})=0$. By the definition of $\mu(\lambda)$ and $P(v)\leq 0$, we have
\[
\mu(\lambda)<S_{\omega}{v^{\lambda_0}}<S_{\omega}{v^{\lambda_0}}-\frac{\lambda_0^2}{2}P(v)=f(\lambda_0)\leq f(1)=S_{\omega}(v)-\frac{1}{2}P(v).
\]
This completes the proof of Lemma \ref{lm new}.
\end{proof}
\begin{lemma}\label{lm4.4}
The set $\mathcal{B}_{\omega}$ is invariant under flow of \eqref{triple power equation}.
\end{lemma}
\begin{proof}
Let $u_0\in \mathcal{B}_{\omega}$ and $u(t)$ be the associated solution of \eqref{triple power equation}. Since $S_{\omega}$ is conserved under flow of \eqref{triple power equation}, we have $S_{\omega}(u(t))=S_{\omega}(u_0)<\mu(\omega)$. Now, we show that $P(u(t))<0$. Assume that, this does nor hold. Then by continuity of the map $t \mapsto P(u(t))$, there exists $t_0>0$ such that $P(u(t_0))=0$. Using Lemma \ref{lm new}, we have $S_{\omega}(u(t_0)) \geq \mu(\omega)$. This contradict to the fact that $S_{\omega}(u(t))<\mu(\omega)$ for all $t$. This completes the proof.  
\end{proof}
We have the following result.
\begin{proposition}\label{pro4.5}
If $u_0\in \mathcal{B}_{\omega} \cap \Sigma$ then the associated solution $u(t)$ of \eqref{triple power equation} blows up in finite time. 
\end{proposition}
\begin{proof}
By Lemma \ref{lm4.4}, we have $u(t) \in \mathcal{B}_{\omega} \cap \Sigma$, for all $t \in I$ the maximal existence time of the solution. From the virial identity and Lemma \ref{lm4.4}, we have
\begin{align*}
\frac{d^2}{dt^2}\norm{xu(t)}^2_{L^2} &= 8P(u(t)) \leq 16 (S_{\omega}(u(t))-\mu(\omega))=16(S_{\omega}(u_0)-\mu(\omega))<0.
\end{align*} 
for all $t\in I$, which implies $|I|<\infty$. We obtain the desired result.
\end{proof}
\begin{proof}[proof of Theorem \ref{theorem 2}]
Using Proposition \ref{pro4.5}, it is sufficient to construct a sequence $varphi_n \in \mathcal{B}_{\omega}\cap \Sigma$ such that $\varphi_n\rightarrow \phi_{\omega}$ as $n\rightarrow\infty$. In our case, $a_1,a_2,a_3$ satisfy the assumption of Theorem \ref{Theorem 1}. Thus, by using similar argument in the proof of Theorem \ref{Theorem 1}, we may check that $\partial_{\lambda}^2S_{\omega}(\phi_{\omega}^{\lambda})\vert_{\lambda=1}<0$. Then, by using similar argument in the proof of Lemma \ref{lm chosing sequence}, we may pick $\varphi_n=\phi_{\omega}^{\lambda_n}$, for $\lambda_n>1$ such that $\lambda_n \rightarrow 1$ as $n\rightarrow\infty$. This completes the proof.
\end{proof}

\section*{Acknowledgement}

I wishe to thank Prof. Stefan Le Coz for the useful discussion and his encouragement. This work is supported by the ANR LabEx CIMI (grant ANR-11-LABX-0040) within the French State Programme “Investissements d’Avenir.

\bibliographystyle{abbrv}
\bibliography{bibliothefornam}

\end{document}